\theoremstyle{definition}
\newtheorem{definition}{Definition}[section]
\newtheorem{example}[definition]{Example}
\newtheorem{remark}[definition]{Remark}
\newtheorem*{algorithm}{Algorithm}
\theoremstyle{plain}
\newtheorem{lemma}[definition]{Lemma}
\newtheorem{proposition}[definition]{Proposition}
\newtheorem{theorem}[definition]{Theorem}
\renewcommand{\arg}{{\ast}}
\newcommand{\calI}{\mathcal{I}}
\newcommand{\calO}{\mathcal{O}}
\newcommand{\calP}{\mathcal{P}}
\newcommand{\calQ}{\mathcal{Q}}
\begin{document}

\begin{frontmatter}

\title{Quadratic Nonsymmetric Quaternary Operads}

\author[]{Murray R. Bremner\corref{mycorrespondingauthor}}

\cortext[mycorrespondingauthor]{Corresponding author.}

\address{Department of Mathematics and Statistics, University of Saskatchewan, Canada}

\ead{bremner@math.usask.ca}

\author{Juana S\'anchez-Ortega}

\address{Department of Mathematics and Applied Mathematics, University of Cape Town,\\South Africa}

\ead{juana.sanchez-ortega@uct.ac.za}

\begin{abstract}
We use computational linear algebra and commutative algebra to study spaces of relations satisfied
by quadrilinear operations.
The relations are analogues of associativity in the sense that they are quadratic (every term involves
two operations) and nonsymmetric (every term involves the identity permutation of the arguments).
We focus on determining those quadratic relations whose cubic consequences have minimal or maximal rank.
We approach these problems from the point of view of the theory of algebraic operads.
\end{abstract}

\begin{keyword}
Linear algebra over polynomial rings, algebraic operads, tree monomials, quadratic relations, 
Gr\"obner bases, nilpotent operads.
\MSC[2010]
Primary
15A54. 
Secondary
05C05, 
13P10, 
13P15, 
15A21, 
18D50. 
\end{keyword}

\end{frontmatter}

\linenumbers


\section{Introduction}

Let $\mathbb{F}$ be an algebraically closed field of characteristic 0.
We recall the definition of the free nonsymmetric operad $\calO$ over $\mathbb{F}$ generated by an 
$n$-ary operation $\phi$.
The operation $\phi$ has $n$ arguments, and any composition of $\phi$ with itself $w$ times is an 
operation whose arity $k$ (number of arguments) is given by
  \[
  k = 1 + w(n-1).
  \]
Thus self-compositions of an $n$-ary operation $\phi$ exist in arity $k$ if and only if $k$ is congruent 
to 1 modulo $n{-}1$.

These self-compositions are most naturally represented as tree monomials, where the trees are complete 
rooted planar $n$-ary trees.
For $w = 1$, the generator $\phi$ corresponds to the tree with one internal node and $n$ leaf nodes:
  \[
  \Tree [ .$\phi$ $\arg$ $\cdots$ $\arg$ ]
  \]
For $w = 2$ we obtain trees of the following form with $2n-1$ leaf nodes:
  \[
  \Tree [ .$\phi$ $\arg$ $\cdots$ [ .$\phi$ $\arg$ $\cdots$ $\arg$ ] $\cdots$ $\arg$ ]
  \]  
For $w = 3$ we obtain trees of the following two forms with $3n-2$ leaf nodes:
  \begin{center}
  \footnotesize
  \Tree [ .$\phi$ $\cdots$ [ .$\phi$ $\cdots$ [ .$\phi$ $\arg$ $\cdots$ $\arg$ ] $\cdots$ ] $\dots$ ] 
  \qquad
  \Tree [ .$\phi$ $\cdots$ [ .$\phi$ $\arg$ $\cdots$ $\arg$ ] $\cdots$ [ .$\phi$ $\arg$ $\cdots$ $\arg$ ] $\cdots$ ] 
  \end{center}

We impose a total order on tree monomials as follows: 
to compare two monomials, we find the position of the leftmost unequal subtrees; 
if these subtrees have different arities, the higher arity precedes; 
if these subtrees have the same arity, we recursively compare the subtrees.

Tree monomials can be composed in many different ways.
If $\alpha$ and $\beta$ are tree monomials of arities $k$ and $\ell$ respectively, then for $1 \le i \le k$
we define the composition $\alpha \circ_i \beta$ to be the tree monomial of arity $k{+}\ell{-}1$ obtained 
by substituting (the root of) $\beta$ for the $i$-th argument of $\alpha$.
We have:
  \begin{itemize}
  \item
  $\calO(1) = \mathbb{F} \iota$ where $\iota$ is the identity operation,
  \item
  $\calO(n) = \mathbb{F} \phi$,
  \item
  $\dim \calO(2n-1) = n$ with ordered basis $\{ \, \phi \circ_i \phi \mid 1 \le i \le n \, \}$,
  \item
  $\dim \calO(3n-2) = \frac12 n (3n-1)$ with ordered basis
    \[
    \{ \, \phi \circ_i ( \phi \circ_j \phi ) \mid 1 \le i, j \le n \, \} \; \cup \;
    \{ \, ( \phi \circ_j \phi ) \circ_i \phi \mid 1 \le i < j \le n \, \}.
    \]
  \end{itemize}

\begin{definition}
We write $\calO(k)$ for the vector space with basis consisting of all $n$-ary tree monomials of arity $k$.
The \textbf{free nonsymmetric $n$-ary operad} $\calO$ generated by the operation $\phi$ is the disjoint union
of the spaces $\calO(k)$ with the operations $\alpha \circ_i \beta$ extended bilinearly:
  \[
  \calO = \coprod_{w=0}^\infty \calO\big(1+w(n{-}1)\big).
  \]
\end{definition}

\begin{lemma}
The dimension of the space $\calO(k)$ is the $n$-ary Catalan number:
  \[
  \dim \calO\big(1+w(n{-}1)\big) = \frac{1}{(n-1)w+1} \binom{nw}{w}.
  \]
\end{lemma}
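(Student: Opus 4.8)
The plan is to reduce the claim to a purely combinatorial count and then evaluate that count by generating functions. By the Definition, a basis of $\calO(1+w(n{-}1))$ is given by the $n$-ary tree monomials of arity $1+w(n{-}1)$, that is, the complete rooted planar trees in which every internal node has exactly $n$ children; such a tree has $w$ internal nodes precisely when it has $1+w(n{-}1)$ leaves. Writing $C_w$ for the number of these trees, I therefore need to show
\[
  C_w = \frac{1}{(n-1)w+1}\binom{nw}{w}.
\]

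First I would record the recursive structure of these trees. A tree with $w \ge 1$ internal nodes is determined by its root together with the ordered $n$-tuple $(T_1,\dots,T_n)$ of subtrees hanging from the root; if $T_i$ has $w_i$ internal nodes then $w_1+\dots+w_n=w-1$, and conversely every such tuple arises exactly once. With the convention $C_0=1$ for the single leaf, this yields the convolution recurrence
\[
  C_w=\sum_{w_1+\dots+w_n=w-1}C_{w_1}\cdots C_{w_n}\qquad(w\ge1).
\]

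Next I would translate this into the ordinary generating function $f(x)=\sum_{w\ge0}C_wx^w$, for which the recurrence is equivalent to the single functional equation $f(x)=1+x\,f(x)^n$. Setting $g=f-1$ gives $g=x(1+g)^n$, so $g$ is the formal power series solution of $g=x\,\phi(g)$ with $\phi(y)=(1+y)^n$. Lagrange inversion then extracts the coefficients directly:
\[
  C_w=[x^w]f=[x^w]g=\frac{1}{w}\,[y^{w-1}]\,(1+y)^{nw}=\frac{1}{w}\binom{nw}{w-1}\qquad(w\ge1).
\]

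Finally I would rewrite this in the stated form. Since $nw-(w-1)=(n-1)w+1$, a short manipulation of factorials gives $\tfrac1w\binom{nw}{w-1}=\tfrac{1}{(n-1)w+1}\binom{nw}{w}$, and the formula also returns $C_0=1$ at $w=0$. The only genuinely delicate step is the application of Lagrange inversion; everything else is routine bookkeeping, and the combinatorial recurrence is forced by the tree structure already described in the excerpt.
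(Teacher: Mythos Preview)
Your argument is correct: the convolution recurrence for complete $n$-ary planar rooted trees, the functional equation $f=1+xf^n$, the application of Lagrange inversion to $g=f-1$, and the final binomial identity are all sound, and the $w=0$ case is handled.

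The paper, however, does not actually prove this lemma: its entire proof is a citation to \S7.5 of Graham--Knuth--Patashnik, \emph{Concrete Mathematics}. So your approach is not so much different as simply more explicit; you have supplied the self-contained generating-function argument that the paper outsources. What your version buys is independence from the reference; what the paper's version buys is brevity, since the result is classical and the focus of the paper lies elsewhere.
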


\begin{proof}
See \S7.5 in Graham et al.~\cite{GKP}.
\end{proof}

\begin{remark}
We assume that the operation $\phi$ has homological degree 0, so that we do not need to concern ourselves
with possible sign changes that may occur when compositions are performed in different orders.
For further details on this and other issues, see the comprehensive introduction to algebraic operads by Loday 
and Vallette \cite{LV}.
The algorithmic and computational aspects are discussed by Bremner and Dotsenko \cite{BD}.
\end{remark}

\begin{definition}  
If $w = 2$ then $k = 2n-1$, and any subspace $R \subseteq \calO(2n-1)$ is a \textbf{space of quadratic 
relations}, since every monomial involves two occurrences of the $n$-ary operation.
The \textbf{operad ideal} $\calI = (R)$ generated by the quadratic subspace $R \subseteq \calO(2n-1)$ is 
the smallest subspace $\calI \subseteq \calO$ which contains $R$ and is closed under left and right 
compositions by arbitrary elements of $\calO$.
That is, for all integers $k, \ell \ge 1$, all $\alpha \in \calI(k) = \calI \cap \calO(k)$, and all 
$\beta \in \calO(\ell)$, we have
  \[
  \alpha \circ_i \beta, \; \beta \circ_j \alpha \in \calI(k+\ell-1) 
  \qquad 
  (1 \le i \le k, 1 \le j \le \ell).
  \]
The \textbf{quotient operad} $\calQ = \calO/\calI$ is defined by $\calQ(k) = \calO(k)/\calI(k)$ for all 
$k \ge 1$ with the induced operations;
this is the general quadratic nonsymmetric $n$-ary operad with one generator.
\end{definition}


\section{The quaternary case ($n = 4$)}

For the rest of this paper, we consider only the case of a quaternary operation $(n = 4)$.
See \cite[Chapter 10]{BD} for a detailed study of the ternary case ($n = 3$).

In low arities, we can write out the tree monomials as placements of parentheses in sequences of
asterisks representing arguments to the operations.
For $\calO(1)$ we have $\{ \, \ast \, \}$;
for $\calO(4)$ we have $\{ \, \phi = (\arg\arg\arg\arg) \, \}$;
for $\calO(7)$ we have
  \[
  \{ \,
  \phi \circ_i \phi \mid 1 \le i \le 4  
  \, \}
  =
  \{ \,
  ((\arg\arg\arg\arg)\arg\arg\arg), \,
  (\arg(\arg\arg\arg\arg)\arg\arg), \, 
  (\arg\arg(\arg\arg\arg\arg)\arg), \,
  (\arg\arg\arg(\arg\arg\arg\arg)) 
  \, \}.
  \]
For $\calO(10)$ see Table \ref{arity10basis}.

  \begin{table}[ht]
  \[
  \boxed{
  \begin{array}{r@{\;}l@{\quad}r@{\;}l}
   1\colon & \phi \circ_1 ( \phi \circ_1 \phi ) = (((\arg\arg\arg\arg)\arg\arg\arg)\arg\arg\arg) &
   2\colon & \phi \circ_1 ( \phi \circ_2 \phi ) = ((\arg(\arg\arg\arg\arg)\arg\arg)\arg\arg\arg) \\
   3\colon & \phi \circ_1 ( \phi \circ_3 \phi ) = ((\arg\arg(\arg\arg\arg\arg)\arg)\arg\arg\arg) &
   4\colon & \phi \circ_1 ( \phi \circ_4 \phi ) = ((\arg\arg\arg(\arg\arg\arg\arg))\arg\arg\arg) \\
   5\colon & ( \phi \circ_2 \phi ) \circ_1 \phi = ((\arg\arg\arg\arg)(\arg\arg\arg\arg)\arg\arg) &
   6\colon & ( \phi \circ_3 \phi ) \circ_1 \phi = ((\arg\arg\arg\arg)\arg(\arg\arg\arg\arg)\arg) \\
   7\colon & ( \phi \circ_4 \phi ) \circ_1 \phi = ((\arg\arg\arg\arg)\arg\arg(\arg\arg\arg\arg)) &
   8\colon & \phi \circ_2 ( \phi \circ_1 \phi ) = (\arg((\arg\arg\arg\arg)\arg\arg\arg)\arg\arg) \\
   9\colon & \phi \circ_2 ( \phi \circ_2 \phi ) = (\arg(\arg(\arg\arg\arg\arg)\arg\arg)\arg\arg) &
  10\colon & \phi \circ_2 ( \phi \circ_3 \phi ) = (\arg(\arg\arg(\arg\arg\arg\arg)\arg)\arg\arg) \\
  11\colon & \phi \circ_2 ( \phi \circ_4 \phi ) = (\arg(\arg\arg\arg(\arg\arg\arg\arg))\arg\arg) &
  12\colon & ( \phi \circ_3 \phi ) \circ_2 \phi = (\arg(\arg\arg\arg\arg)(\arg\arg\arg\arg)\arg) \\
  13\colon & ( \phi \circ_4 \phi ) \circ_2 \phi = (\arg(\arg\arg\arg\arg)\arg(\arg\arg\arg\arg)) &
  14\colon & \phi \circ_3 ( \phi \circ_1 \phi ) = (\arg\arg((\arg\arg\arg\arg)\arg\arg\arg)\arg) \\
  15\colon & \phi \circ_3 ( \phi \circ_2 \phi ) = (\arg\arg(\arg(\arg\arg\arg\arg)\arg\arg)\arg) &
  16\colon & \phi \circ_3 ( \phi \circ_3 \phi ) = (\arg\arg(\arg\arg(\arg\arg\arg\arg)\arg)\arg) \\
  17\colon & \phi \circ_3 ( \phi \circ_4 \phi ) = (\arg\arg(\arg\arg\arg(\arg\arg\arg\arg))\arg) &
  18\colon & ( \phi \circ_4 \phi ) \circ_3 \phi = (\arg\arg(\arg\arg\arg\arg)(\arg\arg\arg\arg)) \\
  19\colon & \phi \circ_4 ( \phi \circ_1 \phi ) = (\arg\arg\arg((\arg\arg\arg\arg)\arg\arg\arg)) &
  20\colon & \phi \circ_4 ( \phi \circ_2 \phi ) = (\arg\arg\arg(\arg(\arg\arg\arg\arg)\arg\arg)) \\
  21\colon & \phi \circ_4 ( \phi \circ_3 \phi ) = (\arg\arg\arg(\arg\arg(\arg\arg\arg\arg)\arg)) &
  22\colon & \phi \circ_4 ( \phi \circ_4 \phi ) = (\arg\arg\arg(\arg\arg\arg(\arg\arg\arg\arg))) 
  \end{array}
  }
  \]
  \vspace{-4mm}
  \caption{The 22 tree monomials forming an ordered basis of $\calO(10)$}
  \label{arity10basis}
  \end{table}

Let $R \subseteq \calO(7)$ be a subspace of quadratic relations.
Given our ordered basis of $\calO(7)$, any subspace $R$ of dimension $r$ is the row space of 
a unique $r \times 4$ matrix $[R]$ in row canonical form (RCF).
\begin{itemize}
\item
For $r = 0$ we have $R = \{0\}$, there are no relations, and $\calQ \cong \calO$.
\item
For $r = 4$ we have $R = \calO(7)$, every quadratic tree monomial is zero, and $\calQ$ is nilpotent of index 2.
\item
For $0 < r < 4$ the relation matrices $[R]$ are given in Table \ref{table[R]}, where
$a, b, c, d$ are parameters that take arbitrary values in $\mathbb{F}$.
\end{itemize}
For each $r = 0,\dots,4$ there are $\binom{4}{r}$ choices for the columns of the leading 1s.

\begin{table}[ht]
\begin{mdframed}
\begin{alignat*}{5}
& r &&&&&&&&
\\
& 1 &\quad
&
\left[ \begin{array}{cccc} 
1 & a & b & c 
\end{array} \right]
&&
\left[ \begin{array}{cccc} 
0 & 1 & a & b 
\end{array} \right]
&&
\left[ \begin{array}{cccc} 
0 & 0 & 1 & a 
\end{array} \right]
&&
\left[ \begin{array}{cccc} 
0 & 0 & 0 & 1 
\end{array} \right]
\\
& 2 &\quad
&
\left[ \begin{array}{cccc} 
1 & 0 & a & b \\
0 & 1 & c & d 
\end{array} \right]
&&
\left[ \begin{array}{cccc} 
1 & a & 0 & b \\
0 & 0 & 1 & c 
\end{array} \right]
&&
\left[ \begin{array}{cccc} 
1 & a & b & 0 \\
0 & 0 & 0 & 1 
\end{array} \right]
&&
\\
&&&&&
\left[ \begin{array}{cccc} 
0 & 1 & 0 & a \\
0 & 0 & 1 & b 
\end{array} \right]
&&
\left[ \begin{array}{cccc} 
0 & 1 & a & 0 \\
0 & 0 & 0 & 1 
\end{array} \right]
&&
\left[ \begin{array}{cccc} 
0 & 0 & 1 & 0 \\
0 & 0 & 0 & 1 
\end{array} \right]
\\
& 3 &\quad
&
\left[ \begin{array}{cccc} 
1 & 0 & 0 & a \\
0 & 1 & 0 & b \\
0 & 0 & 1 & c
\end{array} \right]
&&
\left[ \begin{array}{cccc} 
1 & 0 & a & 0 \\
0 & 1 & b & 0 \\
0 & 0 & 0 & 1
\end{array} \right]
&&
\left[ \begin{array}{cccc} 
1 & a & 0 & 0 \\
0 & 0 & 1 & 0 \\
0 & 0 & 0 & 1
\end{array} \right]
&&
\left[ \begin{array}{cccc} 
0 & 1 & 0 & 0 \\
0 & 0 & 1 & 0 \\
0 & 0 & 0 & 1
\end{array} \right]
\end{alignat*}
\end{mdframed}
\caption{The relation matrices $[R]$ of ranks $1 \le r \le 3$ in arity 7}
\label{table[R]}
\end{table}

The entries of the matrix $[R]$ form an initial sequence of $t \ge 2$ elements of the ordered set 
$\{ 0, 1, a, b, c, d \}$, and except in a few special cases we have $t \ge 3$.
Thus in general $[R]$ is a matrix over a polynomial ring in 0 to 4 variables.
For matrices over $\mathbb{F}$ we can use Gaussian elimination to compute the RCF, and for matrices
over $\mathbb{F}[a]$ we can combine Gaussian elimination with the Euclidean algorithm for GCDs to
compute the Hermite normal form (HNF).
But for two or more variables, we have to use Gr\"obner basis techniques.
Let $P$ denote the polynomial ring over $\mathbb{F}$ in $k \ge 2$ variables.
We regard a row vector of length $t$ with entries in $P$ as an element of the free $P$-module $P^t$ of rank $t$.
We regard an $s \times t$ matrix $A$ with entries in $P$ as a set of $s$ generators for a submodule of $P^t$.
To find a canonical form for $A$, we may then appeal to the theory of Gr\"obner bases for submodules of free
modules over polynomial rings; see for example Cox et al. \cite[Chapter 5]{Cox}.

To compute the Gr\"obner basis for the submodule generated by the rows of $A$,
we use elementary row operations over $P$:
  \begin{itemize}
  \item
  interchange two rows,
  \item
  multiply a row by an invertible element of $P$ (a nonzero element of $\mathbb{F}$),
  \item
  add an arbitrary $P$-multiple of one row to another row. 
  \end{itemize} 
In order to have room to compute S-polynomials, we may also need to add zero rows to the bottom of the matrix.

We have introduced parameters into the quadratic relation matrices $[R]$, so we have implicitly 
changed the ring of coefficients from the field $\mathbb{F}$ to the polynomial ring $P$ generated over
$\mathbb{F}$ by the entries of $[R]$.
Thus all linear structures are no longer vector spaces over $\mathbb{F}$ but modules over $P$.
In particular $\calO(k)$ is a free $P$-module, and $\calI(k)$ is a submodule of this free $P$-module.

\begin{definition}
Every row of the relation matrix $[R]$ is the coefficient vector of a quadratic relation 
$\rho = \Sigma_{i=1}^4 c_i \, \phi \circ_i \phi \in \calO(7)$.
The homogeneous submodule $\calI(10)$ is generated
by the \textbf{consequences} of the relations $\rho$; for each $\rho$ these are obtained by 
right- and left-multiplying $\rho$ by $\phi$.
Thus $\rho \circ_i \phi$ ($1 \le i \le 7$) and $\phi \circ_j \rho$ ($1 \le j \le 4$) give 11
consequences for each $\rho$.
\end{definition}

\begin{lemma}
Assume that the quadratic relation matrix $[R]$ has rank $r$ and entries in the polynomial ring $P$.
Then the cubic relation module $\calI(10)$ is the submodule of the free $P$-module of rank 22 generated
by the rows of the $11r \times 22$ matrix whose rows are the coefficient vectors of the consequences
of $[R]$.
\end{lemma}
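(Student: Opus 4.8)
The plan is to prove the two containments separately, with the nontrivial direction resting on a grading argument. The inclusion of the submodule generated by the $11r$ consequences into $\calI(10)$ is immediate: each row of $[R]$ is the coefficient vector of a quadratic relation $\rho \in R \subseteq \calI(7)$, and by the defining closure property of the operad ideal, both $\rho \circ_i \phi$ (for $1 \le i \le 7$) and $\phi \circ_j \rho$ (for $1 \le j \le 4$) lie in $\calI(10)$. Since $\calI(10)$ is a $P$-submodule, every $P$-linear combination of these $11r$ vectors lies in $\calI(10)$ as well, giving the easy inclusion.

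For the reverse inclusion I would first pin down the ideal in low arities. Every element of $\calI$ is a $P$-linear combination of iterated compositions in which at least one factor is drawn from $R \subseteq \calO(7)$ and the remaining factors from $\calO$; since each composition $\alpha \circ_i \beta$ with $\beta \in \calO(\ell)$ contributes $\ell - 1 \ge 0$ to the arity, and contributes $0$ only when $\ell = 1$, that is when $\beta = \iota$ acts trivially, no such composition can have arity below $7$. Hence $\calI(1) = \calI(4) = 0$ and $\calI(7) = R$. This vanishing in low arities is the crux on which the rest of the argument turns.

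Next I would exploit the fact that valid operad arities are exactly the integers $\equiv 1 \pmod 3$, namely $1, 4, 7, 10, \dots$. By the definition of the ideal, any element of $\calI(10)$ is a $P$-linear combination of compositions $\alpha \circ_i \beta$ and $\beta \circ_j \alpha$ with $\alpha \in \calI(k)$, $\beta \in \calO(\ell)$, and $k + \ell - 1 = 10$. Using $\calI(k) = 0$ for $k < 7$ we may assume $k \ge 7$; combined with $k + \ell - 1 = 10$ and the congruence constraint on the admissible arity $\ell$, the only possibilities are $(k,\ell) = (7,4)$ and $(k,\ell) = (10,1)$. The case $\ell = 1$ forces $\beta = \iota$ up to a scalar, so the composition reduces to $\alpha \circ_1 \iota = \alpha$ and $\iota \circ_1 \alpha = \alpha$, contributing nothing new. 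Therefore $\calI(10)$ is spanned over $P$ by the compositions $\rho \circ_i \phi$ and $\phi \circ_j \rho$ with $\rho \in R$, that is, by the consequences of the elements of $R$.

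Finally, since each composition operation $\circ_i$ is $P$-bilinear and $R$ is generated as a $P$-module by the $r$ rows of $[R]$, the consequences of an arbitrary $\rho \in R$ are $P$-linear combinations of the consequences of these $r$ generators. It follows that the $11r$ coefficient vectors indexed by the $r$ rows and the $11$ composition slots generate $\calI(10)$, which completes the proof. I do not expect any technical obstacle beyond the careful bookkeeping of the grading argument in the third paragraph; the one place demanding attention is the complete enumeration of admissible pairs $(k,\ell)$ and the correct dismissal of the $\iota$-compositions as trivial. The essential content is simply that arity $10$ is reachable from the quadratic generators in exactly one composition step, with $\phi$ the unique admissible cofactor on either side.
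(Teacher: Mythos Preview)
The paper states this lemma without proof, immediately after defining consequences; it treats the claim as following directly from the definitions. Your argument is correct and supplies the details the paper leaves implicit. The grading analysis in your third paragraph---that $(k,\ell) = (7,4)$ is the only nontrivial way to write arity $10$ as $k + \ell - 1$ with $k \ge 7$ and $\ell$ an admissible arity---is exactly the content of the lemma, and your dismissal of the $\iota$-compositions is sound.

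One minor stylistic remark: your sentence ``any element of $\calI(10)$ is a $P$-linear combination of compositions $\alpha \circ_i \beta$ and $\beta \circ_j \alpha$ with $\alpha \in \calI(k)$, $\beta \in \calO(\ell)$, and $k + \ell - 1 = 10$'' formally allows $k = 10$, which at first glance looks circular. You do resolve this correctly by showing the $(10,1)$ case is trivial, but the argument would read more cleanly if you phrased it as an inductive construction of $\calI$ arity by arity (so that only components $\calI(k)$ with $k < 10$ appear on the right-hand side), which is what the ``smallest closed subspace'' characterization really gives you.
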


Among our goals in this paper, we mention the following: 
\begin{itemize}
\item
Determine how $\dim\calI(10)$ depends on the parameters in $[R]$,
and in particular, find the minimum and maximum cubic dimension.
\item
The operads with the minimum dimension have the smallest spaces of cubic relations,
hence the largest quotient spaces; for the given quadratic rank, they are as close to free as possible.
\item
The operads with the maximum dimension have the smallest quotient spaces; they are as close to nilpotent 
as possible.
\end{itemize}

All computations were performed on a MacBook Pro using the Maple 18 packages ListTools, LinearAlgebra,
and Groebner.


\section{Quadratic Relation Rank 1}

\subsection{Case 1}

For rank 1, case 1, the quadratic relation matrix is
\[
[R]
=
\left[ 
\begin{array}{cccc} 
1 & a & b & c
\end{array} 
\right]
\]
The row module $\calI(7)$ of $[R]$ over $P = \mathbb{F}[a,b,c]$ is generated by the relation
  \[
  \rho = 
  \phi \circ_1 \phi + a \, \phi \circ_2 \phi + b \, \phi \circ_3 \phi + c \, \phi \circ_4 \phi.
  \]

\begin{lemma}
Using the basis of Table \ref{arity10basis}, the 11 consequences of $\rho$ in arity 10 are given 
in Table \ref{rank1case1consequences}.
The matrix whose row module $\calI(10)$ is generated by these consequences is given in Table
\ref{rank1case1arity10}; the rows have been sorted so that the leading 1s go from left to right
(we have written dot for zero to increase legibility).
\end{lemma}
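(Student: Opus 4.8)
The plan is to establish both tables by direct computation, expanding each of the eleven consequences of $\rho$ as a linear combination of the twenty-two tree monomials of Table~\ref{arity10basis}. Because tree monomials form a basis of the free operad $\calO$, every composite that arises is \emph{literally} one of these basis trees; the only task is to recognize which one, so the proof reduces to a finite and purely mechanical identification with no relations to apply. I would split the eleven consequences into the four left compositions $\phi \circ_j \rho$ $(1 \le j \le 4)$ and the seven right compositions $\rho \circ_i \phi$ $(1 \le i \le 7)$, since these two families behave quite differently.

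The four left compositions are immediate. Expanding bilinearly,
\[
\phi \circ_j \rho
=
\phi \circ_j (\phi \circ_1 \phi)
+ a\,\phi \circ_j (\phi \circ_2 \phi)
+ b\,\phi \circ_j (\phi \circ_3 \phi)
+ c\,\phi \circ_j (\phi \circ_4 \phi),
\]
and each of the four summands is already one of the basis monomials $\phi \circ_j (\phi \circ_k \phi)$ of Table~\ref{arity10basis}. No rewriting is needed, so these four rows carry precisely the entries $1, a, b, c$ in the four columns indexed by $k = 1,2,3,4$ for the fixed value of $j$.

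The seven right compositions require the bookkeeping. Expanding $\rho \circ_i \phi$ bilinearly produces the four terms $(\phi \circ_k \phi) \circ_i \phi$ $(k = 1,2,3,4)$ with coefficients $1, a, b, c$, and for each I must locate the slot $i$ inside the arity-$7$ tree $\phi \circ_k \phi$. In that tree the inner copy of $\phi$ occupies the four slots $k, k{+}1, k{+}2, k{+}3$, while slots $1, \dots, k{-}1$ and $k{+}4, \dots, 7$ remain outer arguments of the root. Three cases arise. If $k \le i \le k{+}3$, then $\phi$ is grafted onto an argument of the inner node and the composite is the chain monomial $\phi \circ_k (\phi \circ_{\,i-k+1} \phi)$. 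If $i < k$, then $\phi$ is grafted onto an outer argument to the left of the inner node, giving the branching monomial $(\phi \circ_k \phi) \circ_i \phi$, already in the normal form $(\phi \circ_{j'} \phi) \circ_{i'} \phi$ with $i' < j'$ used in Table~\ref{arity10basis}. If $i > k{+}3$, then the three extra slots contributed by the inner node shift the outer arguments, so $\phi$ lands in root slot $i{-}3 > k$ and the composite is the branching monomial $(\phi \circ_{\,i-3} \phi) \circ_k \phi$; here the normal-form convention forces the index swap sending $(\phi \circ_k \phi) \circ_i \phi$ to $(\phi \circ_{\,i-3} \phi) \circ_k \phi$.

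Running this trichotomy over all seven values of $i$ and all four values of $k$ yields the eleven coefficient vectors, which I would record as the rows of an $11 \times 22$ matrix and then sort by the column of the first nonzero entry to obtain Table~\ref{rank1case1arity10}. I expect the only real obstacle to be the index arithmetic: tracking the three-slot shift of the outer arguments and applying the $i' < j'$ normalization consistently, since a single off-by-one in the case $i > k{+}3$ would misplace several entries. Once these are handled uniformly, both tables follow by inspection.
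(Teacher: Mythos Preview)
Your proposal is correct and matches the paper's approach: the paper offers no explicit proof for this lemma, simply presenting the two tables as the outcome of the direct computation you describe. Your trichotomy for identifying each composite $(\phi \circ_k \phi) \circ_i \phi$ with a basis monomial (inner grafting when $k \le i \le k{+}3$, outer-left when $i < k$, outer-right with the three-slot shift when $i > k{+}3$) is exactly the bookkeeping underlying Table~\ref{rank1case1consequences}, and the index arithmetic checks out against the tabulated consequences.
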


\begin{table}[ht]
\[
\boxed{
\begin{array}{l}
\rho \circ_1 \phi = 
\phi \circ_1 ( \phi \circ_1 \phi ) + a \, 
( \phi \circ_2 \phi ) \circ_1 \phi + b \, 
( \phi \circ_3 \phi ) \circ_1 \phi + c \, 
( \phi \circ_4 \phi ) \circ_1 \phi,
\\
\rho \circ_2 \phi = 
\phi \circ_1 ( \phi \circ_2 \phi ) + a \, 
\phi \circ_2 ( \phi \circ_1 \phi ) + b \, 
( \phi \circ_3 \phi ) \circ_2 \phi + c \, 
( \phi \circ_4 \phi ) \circ_2 \phi,
\\
\rho \circ_3 \phi = 
\phi \circ_1 ( \phi \circ_3 \phi ) + a \, 
\phi \circ_2 ( \phi \circ_2 \phi ) + b \, 
\phi \circ_3 ( \phi \circ_1 \phi ) + c \, 
( \phi \circ_4 \phi ) \circ_3 \phi,
\\
\rho \circ_4 \phi = 
\phi \circ_1 ( \phi \circ_4 \phi ) + a \, 
\phi \circ_2 ( \phi \circ_3 \phi ) + b \, 
\phi \circ_3 ( \phi \circ_2 \phi ) + c \, 
\phi \circ_4 ( \phi \circ_1 \phi ),
\\
\rho \circ_5 \phi = 
( \phi \circ_2 \phi ) \circ_1 \phi + a \, 
\phi \circ_2 ( \phi \circ_4 \phi ) + b \, 
\phi \circ_3 ( \phi \circ_3 \phi ) + c \, 
\phi \circ_4 ( \phi \circ_2 \phi ),
\\
\rho \circ_6 \phi = 
( \phi \circ_3 \phi ) \circ_1 \phi + a \, 
( \phi \circ_3 \phi ) \circ_2 \phi + b \, 
\phi \circ_3 ( \phi \circ_4 \phi ) + c \, 
\phi \circ_4 ( \phi \circ_3 \phi ),
\\
\rho \circ_7 \phi = 
( \phi \circ_4 \phi ) \circ_1 \phi + a \, 
( \phi \circ_4 \phi ) \circ_2 \phi + b \, 
( \phi \circ_4 \phi ) \circ_3 \phi + c \, 
\phi \circ_4 ( \phi \circ_4 \phi ),
\\
\phi \circ_1 \rho = 
\phi \circ_1 ( \phi \circ_1 \phi ) + a \, 
\phi \circ_1 ( \phi \circ_2 \phi ) + b \, 
\phi \circ_1 ( \phi \circ_3 \phi ) + c \, 
\phi \circ_1 ( \phi \circ_4 \phi ),
\\
\phi \circ_2 \rho = 
\phi \circ_2 ( \phi \circ_1 \phi ) + a \, 
\phi \circ_2 ( \phi \circ_2 \phi ) + b \, 
\phi \circ_2 ( \phi \circ_3 \phi ) + c \, 
\phi \circ_2 ( \phi \circ_4 \phi ),
\\
\phi \circ_3 \rho = 
\phi \circ_3 ( \phi \circ_1 \phi ) + a \, 
\phi \circ_3 ( \phi \circ_2 \phi ) + b \, 
\phi \circ_3 ( \phi \circ_3 \phi ) + c \, 
\phi \circ_3 ( \phi \circ_4 \phi ),
\\
\phi \circ_4 \rho = 
\phi \circ_4 ( \phi \circ_1 \phi ) + a \, 
\phi \circ_4 ( \phi \circ_2 \phi ) + b \, 
\phi \circ_4 ( \phi \circ_3 \phi ) + c \, 
\phi \circ_4 ( \phi \circ_4 \phi ).
\end{array}
}
\]
\vspace{-4mm}
\caption{Rank 1, case 1: consequences in arity 10 of relation $\rho$}
\label{rank1case1consequences}
\[
\left[
\begin{array}{cccccccccccccccccccccc}
1 & a & b & c & . & . & . & . & . & . & . & . & . & . & . & . & . & . & . & . & . & . \\[-2pt]
1 & . & . & . & a & b & c & . & . & . & . & . & . & . & . & . & . & . & . & . & . & . \\[-2pt]
. & 1 & . & . & . & . & . & a & . & . & . & b & c & . & . & . & . & . & . & . & . & . \\[-2pt]
. & . & 1 & . & . & . & . & . & a & . & . & . & . & b & . & . & . & c & . & . & . & . \\[-2pt]
. & . & . & 1 & . & . & . & . & . & a & . & . & . & . & b & . & . & . & c & . & . & . \\[-2pt]
. & . & . & . & 1 & . & . & . & . & . & a & . & . & . & . & b & . & . & . & c & . & . \\[-2pt]
. & . & . & . & . & 1 & . & . & . & . & . & a & . & . & . & . & b & . & . & . & c & . \\[-2pt]
. & . & . & . & . & . & 1 & . & . & . & . & . & a & . & . & . & . & b & . & . & . & c \\[-2pt]
. & . & . & . & . & . & . & 1 & a & b & c & . & . & . & . & . & . & . & . & . & . & . \\[-2pt]
. & . & . & . & . & . & . & . & . & . & . & . & . & 1 & a & b & c & . & . & . & . & . \\[-2pt]
. & . & . & . & . & . & . & . & . & . & . & . & . & . & . & . & . & . & 1 & a & b & c
\end{array}
\right]
\]
\vspace{-4mm}
\caption{Rank 1, case 1: matrix of consequences in arity 10}
\label{rank1case1arity10}
\end{table}

\begin{proposition} \label{rank1case1arity10theorem}
The matrix of Table \ref{rank1case1arity10} has minimal rank 10 and maximal rank 11.
Rank 10 is achieved for the following values of $a, b, c$:
  \[
  (a,b,c) \, = \,  
  (0,0,0), \;
  (0,0,-1), \;
  (-1,1,-1), \;
  (\alpha,\alpha^2,-1) \qquad (\alpha^2-\alpha+1 = 0).
  \]
All other values of $a, b, c$ give rank 11.
\end{proposition}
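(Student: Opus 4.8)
The plan is to exploit the near-triangular structure of the matrix in Table \ref{rank1case1arity10}. First I would record the leading (pivot) columns of its eleven rows: reading down the table, rows $2$ through $11$ have their leading $1$'s in the distinct columns $1,2,3,4,5,6,7,8,14,19$, whereas row $1$ also leads in column $1$. Since rows $2$--$11$ carry unit pivots in ten distinct columns, they are linearly independent over $\mathbb{F}$ for \emph{every} choice of $a,b,c$, so the submatrix they span has rank exactly $10$ unconditionally. Consequently the rank of the full matrix is either $10$ or $11$, and it equals $11$ precisely when row $1$ fails to be an $\mathbb{F}$-linear combination of rows $2$--$11$. This reduces the whole proposition to a single dependency question.

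Second, I would reduce row $1$ modulo rows $2$--$11$ by Gaussian elimination over $P=\mathbb{F}[a,b,c]$: subtract row $2$ to clear column $1$, subtract the forced $P$-multiples of rows $3,\dots,9$ to clear columns $2,\dots,8$, and finally subtract multiples of rows $10,11$ to clear columns $14$ and $19$. Because the pivots are units in distinct columns, this reduction is uniquely determined and terminates in one residual row supported on the nine non-pivot columns $9,10,11,15,16,17,20,21,22$. A direct computation gives its entries, up to sign, as
\[
a(a^2-b),\; a(ab-c),\; a^2(1+c),\; b(ab-c),\; b(a+b^2),\; b^2(1+c),\; ac(1+c),\; bc(1+c),\; c^2(1+c).
\]
Row $1$ then lies in the span of rows $2$--$11$, forcing the rank down to $10$, exactly when all nine of these polynomials vanish at $(a,b,c)$; for all other parameter values the rank is $11$. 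Since $\mathbb{F}$ is infinite, a generic point gives rank $11$ and the special points give rank $10$, which already yields the asserted minimal and maximal ranks.

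Third comes the genuinely mathematical step: solving this system. The factor $(1+c)$ occurring in five of the entries suggests splitting on whether $c=-1$. If $1+c\ne 0$, then the three entries $a^2(1+c)$, $b^2(1+c)$, $c^2(1+c)$ force $a=b=c=0$, giving $(0,0,0)$. If $c=-1$, those five entries vanish automatically and the remaining conditions become $a(a^2-b)=0$, $a(ab+1)=0$, $b(ab+1)=0$, $b(a+b^2)=0$; the subcase $a=0$ yields $(0,0,-1)$, while $a\ne 0$ forces $b=a^2$ together with $a^3=-1$. The factorization $a^3+1=(a+1)(a^2-a+1)$ splits this into $a=-1$, giving $(-1,1,-1)$, and $a^2-a+1=0$, giving $(\alpha,\alpha^2,-1)$. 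All four families match the statement exactly.

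The main obstacle I anticipate is not the row reduction, which is mechanical, but certifying that the case analysis in the third step is exhaustive and introduces no spurious points. The cleanest certificate, in keeping with the methods of this paper, is to compute a Gr\"obner basis for the ideal generated by the nine residual polynomials and read off its primary decomposition; this confirms that the vanishing locus consists of precisely the three isolated points $(0,0,0)$, $(0,0,-1)$, $(-1,1,-1)$ and the conjugate pair $(\alpha,\alpha^2,-1)$ cut out by $\alpha^2-\alpha+1=0$, and nothing more.
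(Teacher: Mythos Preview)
Your proposal is correct and follows essentially the same approach as the paper: both perform Gaussian elimination to isolate a single residual row whose nine nonzero entries are exactly the polynomials you list (the paper writes them in expanded form), and both then solve the resulting system by the same case split on $c=0$ versus $c=-1$, factoring $a^3+1=(a+1)(a^2-a+1)$. The only cosmetic differences are that the paper phrases the row reduction as computing a module Gr\"obner basis and first passes through a deglex Gr\"obner basis of the ideal before the case analysis, whereas you argue the rank bound directly from the ten unit pivots and solve the system by hand; also, a tiny wording slip: there are twelve non-pivot columns, not nine, but the residual happens to vanish in columns $12$, $13$, $18$, so your list of nine nonzero entries is correct.
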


\begin{proof}
We write $P = \mathbb{F}[a,b,c]$.
We compute a Gr\"obner basis for the submodule of the free $P$-module $P^{22}$ generated by the 11 rows
of the matrix in Table \ref{rank1case1arity10}.
This is straightforward since except at one step the set $\{1\}$ is a Gr\"obner basis for the ideal
generated by the matrix entries at and below the current pivot.

We swap the leading 1s in the first 8 columns up to the diagonal and use them to create an identity matrix
of size 8 in the upper left corner.
At the next step, the pivot is in position $(9,9)$ and contains $-a^3+ab$ with two 0s below it; we multiply
row 9 by $-1$ to make the leading entry monic, and use this leading entry to reduce the entry in row 1.
The last two pivots are leading 1s in columns 14 and 19, which we use to eliminate the nonzero entries in
those columns.

Thus every row has a leading 1 except for row 9, so the matrix has rank 10 or 11, and the rank is 10 if and
only if row 9 is zero.
The nonzero entries in row 9 from left to right are:
  \[
  a^3-ab, \, a^2b-ac, \, a^2c+a^2, \, ab^2-bc, \, b^3+ab, \, b^2c+b^2, \, ac^2+ac, \, bc^2+bc, \, c^3+c^2.
  \]
These polynomials generate an ideal $I \subset P$ with this deglex Gr\"obner basis:
  \[
  b^2-ca, \quad
  cb+a^2, \quad
  a(a^2-b), \quad
  a(ab-c), \quad
  a^2(c+1), \quad
  ac(c+1), \quad
  c^2(c+1).
  \]
A power of each parameter occurs as the leading monomial of an element of the Gr\"obner basis, 
and so the ideal is zero-dimensional (its zero set is finite).

From the last element we see that either $c = 0$ or $c = -1$.
If $c = 0$ then the basis reduces to $b^2$, $a^2$, $a(a^2-b)$, $a^2b$, $a^2$ and hence $a = b = 0$.
If $c = -1$ then the basis reduces to 
  \[
  f_1 = b^2+a, \quad
  f_2 = a^2-b, \quad
  f_3 = a(a^2-b), \quad
  f_4 = a(ab+1). \quad
  \]
We have $f_3 = a f_2$ and $f_4 = f_1 + b f_2$, so we need only $f_1$ and $f_2$.
Setting $b = a^2$ in $b^2 + a$ gives $a^4 + a = a(a+1)(a^2-a+1)$, giving the solutions $a = 0$, $-1$, 
or a primitive 6th root of unity.
This completes the proof.
\end{proof}

\subsection{Case 2}

For rank 1, case 2, the quadratic relation matrix is
\[
[R]
=
\left[ 
\begin{array}{cccc} 
0 & 1 & a & b
\end{array} 
\right]
\]
The row module $\calI(7)$ of $[R]$ over $P = \mathbb{F}[a,b]$ is generated by the relation
  \[
  \rho = 
  \phi \circ_2 \phi + a \, \phi \circ_3 \phi + b \, \phi \circ_4 \phi.
  \]

\begin{lemma}
Using the basis of Table \ref{arity10basis}, the 11 consequences of $\rho$ in arity 10 are given 
in Table \ref{rank1case2consequences}.
The matrix whose row module $\calI(10)$ over $P$ is generated by these consequences is given in Table
\ref{rank1case2arity10}.
\end{lemma}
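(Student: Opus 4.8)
The plan is to prove the statement by direct computation of the eleven consequences, exactly as was done for case~1. Recall that the partial composition $\alpha \circ_i \beta$ of two tree monomials is realized by grafting the root of $\beta$ onto the $i$-th leaf of $\alpha$; since grafting two trees yields a single tree, the result is again a tree monomial, i.e.\ a single basis element of the appropriate $\calO(k)$. Consequently each of the eleven consequences of $\rho = \phi \circ_2 \phi + a\,\phi \circ_3 \phi + b\,\phi \circ_4 \phi$ is an $\mathbb{F}[a,b]$-linear combination of the twenty-two basis monomials of Table~\ref{arity10basis} in which every summand is a single basis element carrying a coefficient from $\{1,a,b\}$; no linear reduction is needed beyond identifying which basis element each grafted tree equals. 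Reading off these coefficient vectors and sorting the rows by the position of their leading $1$ then produces the matrix of Table~\ref{rank1case2arity10}, and the assertion that $\calI(10)$ is its row module is immediate once the rows are the eleven consequences.

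First I would compute the seven right compositions $\rho \circ_i \phi$ for $1 \le i \le 7$, grafting a fresh copy of $\phi$ onto the $i$-th leaf of each of the three arity-$7$ monomials in $\rho$. For a monomial $\phi \circ_m \phi$ (root $\phi$ with a single inner $\phi$ occupying argument $m$), grafting onto a leaf belonging to the root produces a second branch at the root, hence a ``type-B'' monomial $(\phi \circ_q \phi) \circ_p \phi$ with $p < q$, whereas grafting onto a leaf of the inner $\phi$ deepens that branch into a ``type-A'' caterpillar $\phi \circ_p (\phi \circ_q \phi)$. In each case the operad associativity axiom (for the deepening) and the equivariance/parallel axiom (for the branching) fix the indices $p,q$, and I would record the resulting basis element together with its coefficient. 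Next I would compute the four left compositions $\phi \circ_j \rho$ for $1 \le j \le 4$: here the entire arity-$7$ relation $\rho$ is substituted into argument $j$ of a new root $\phi$, yielding directly the type-A monomials $\phi \circ_j (\phi \circ_m \phi)$ with $m \in \{2,3,4\}$ and coefficients $1,a,b$.

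The hard part will be purely the bookkeeping rather than anything conceptual. Inserting a $4$-ary operation raises the arity by three and relabels every leaf to its right, so one must carefully track how the leaf index $i$ in $\rho \circ_i \phi$ translates into an argument position inside the grafted tree; a single off-by-one slip in this relabeling will misidentify a basis element. A second point requiring care is the normalization convention for type-B monomials: Table~\ref{arity10basis} lists $(\phi \circ_j \phi) \circ_i \phi$ only for $i < j$, so whenever a grafting produces the two root branches with the inner $\phi$ at the smaller argument, one must renormalize to the canonical $i<j$ form via the parallel-composition axiom before matching it against the table. Once these identifications are made consistently, assembling the coefficient vectors into the $11 \times 22$ matrix and sorting the rows is mechanical, and the outcome agrees with the Maple computation underlying Table~\ref{rank1case2arity10}.
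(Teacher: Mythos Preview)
Your proposal is correct and follows exactly the approach implicit in the paper: the lemma is stated there without proof, as the tables simply record the outcome of the direct computation of the eleven compositions $\rho\circ_i\phi$ ($1\le i\le7$) and $\phi\circ_j\rho$ ($1\le j\le4$), carried out just as in case~1. Your remarks about the leaf-relabeling bookkeeping and the $i<j$ normalization for the type-B monomials are precisely the details one must track, and once done the matrix of Table~\ref{rank1case2arity10} follows mechanically.
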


\begin{table}[ht]
\[
\boxed{
\begin{array}{l}
\rho \circ_1 \phi =  
( \phi \circ_2 \phi ) \circ_1 \phi + a \, 
( \phi \circ_3 \phi ) \circ_1 \phi + b\, 
( \phi \circ_4 \phi ) \circ_1 \phi,
\\
\rho \circ_2 \phi = 
\phi \circ_2 ( \phi \circ_1 \phi ) + a \, 
( \phi \circ_3 \phi ) \circ_2 \phi + b \, 
( \phi \circ_4 \phi ) \circ_2 \phi,
\\
\rho \circ_3 \phi = 
\phi \circ_2 ( \phi \circ_2 \phi ) + a \, 
\phi \circ_3 ( \phi \circ_1 \phi ) + b \, 
( \phi \circ_4 \phi ) \circ_3 \phi,
\\
\rho \circ_4 \phi = 
\phi \circ_2 ( \phi \circ_3 \phi ) + a \, 
\phi \circ_3 ( \phi \circ_2 \phi ) + b \, 
\phi \circ_4 ( \phi \circ_1 \phi ),
\\
\rho \circ_5 \phi =  
\phi \circ_2 ( \phi \circ_4 \phi ) + a \, 
\phi \circ_3 ( \phi \circ_3 \phi ) + b \, 
\phi \circ_4 ( \phi \circ_2 \phi ),
\\
\rho \circ_6 \phi = 
( \phi \circ_3 \phi ) \circ_2 \phi + a \, 
\phi \circ_3 ( \phi \circ_4 \phi ) + b \, 
\phi \circ_4 ( \phi \circ_3 \phi ),
\\
\rho \circ_7 \phi =  
( \phi \circ_4 \phi ) \circ_2 \phi + a \, 
( \phi \circ_4 \phi ) \circ_3 \phi + b \, 
\phi \circ_4 ( \phi \circ_4 \phi ),
\\
\phi \circ_1 \rho =  
\phi \circ_1 ( \phi \circ_2 \phi ) + a \, 
\phi \circ_1 ( \phi \circ_3 \phi ) + b \, 
\phi \circ_1 ( \phi \circ_4 \phi ),
\\
\phi \circ_2 \rho = 
\phi \circ_2 ( \phi \circ_2 \phi ) + a \, 
\phi \circ_2 ( \phi \circ_3 \phi ) + b \, 
\phi \circ_2 ( \phi \circ_4 \phi ),
\\
\phi \circ_3 \rho = 
\phi \circ_3 ( \phi \circ_2 \phi ) + a \, 
\phi \circ_3 ( \phi \circ_3 \phi ) + b \, 
\phi \circ_3 ( \phi \circ_4 \phi ),
\\
\phi \circ_4 \rho = 
\phi \circ_4 ( \phi \circ_2 \phi ) + a \, 
\phi \circ_4 ( \phi \circ_3 \phi ) + b \, 
\phi \circ_4 ( \phi \circ_4 \phi ).
\end{array}
}
\]
\vspace{-3mm}
\caption{Rank 1, case 2: consequences in arity 10 of the relation $\rho$}
\label{rank1case2consequences}
\[
\left[
\begin{array}{cccccccccccccccccccccc}
. & 1 & a & b & . & . & . & . & . & . & . & . & . & . & . & . & . & . & . & . & . & . \\[-2pt] 
. & . & . & . & 1 & a & b & . & . & . & . & . & . & . & . & . & . & . & . & . & . & . \\[-2pt] 
. & . & . & . & . & . & . & 1 & . & . & . & a & b & . & . & . & . & . & . & . & . & . \\[-2pt] 
. & . & . & . & . & . & . & . & 1 & a & b & . & . & . & . & . & . & . & . & . & . & . \\[-2pt] 
. & . & . & . & . & . & . & . & 1 & . & . & . & . & a & . & . & . & b & . & . & . & . \\[-2pt] 
. & . & . & . & . & . & . & . & . & 1 & . & . & . & . & a & . & . & . & b & . & . & . \\[-2pt] 
. & . & . & . & . & . & . & . & . & . & 1 & . & . & . & . & a & . & . & . & b & . & . \\[-2pt] 
. & . & . & . & . & . & . & . & . & . & . & 1 & . & . & . & . & a & . & . & . & b & . \\[-2pt] 
. & . & . & . & . & . & . & . & . & . & . & . & 1 & . & . & . & . & a & . & . & . & b \\[-2pt] 
. & . & . & . & . & . & . & . & . & . & . & . & . & . & 1 & a & b & . & . & . & . & . \\[-2pt] 
. & . & . & . & . & . & . & . & . & . & . & . & . & . & . & . & . & . & . & 1 & a & b
\end{array}
\right]
\]
\vspace{-4mm}
\caption{Rank 1, case 2: matrix of consequences in arity 10}
\label{rank1case2arity10}
\end{table}

\begin{proposition}
The rank of the matrix of Table \ref{rank1case2arity10} is either 10 or 11; 
it is 10 if and only if $a = b = 0$.
\end{proposition}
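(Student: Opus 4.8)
The plan is to reuse the row-reduction method from the proof of Proposition~\ref{rank1case1arity10theorem}, observing that case 2 collapses to a single one-step reduction with no Gr\"obner basis computation needed. Working over $P = \mathbb{F}[a,b]$ and letting $e_1,\dots,e_{22}$ denote the standard basis of the free module $P^{22}$, I would first read off the leading (leftmost nonzero) entry of each of the 11 rows of Table~\ref{rank1case2arity10}. Each leading entry equals $1$, and these leading $1$s occur in columns $2,5,8,9,9,10,11,12,13,15,20$; thus they occupy only ten distinct columns, the sole coincidence being that rows 4 and 5 both have their leading $1$ in column~9. The ten rows other than row 5 therefore have distinct leading columns and are linearly independent, so the rank is at least 10; since there are only 11 rows the rank is at most 11. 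Hence the rank is $10$ or $11$, and the entire question reduces to deciding when row 5 is a combination of the other ten rows.

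Next I would resolve the collision directly. Forming the difference $d$ of rows 5 and 4 (both have leading $1$ in column 9) gives
\[
d = -a\,e_{10} - b\,e_{11} + a\,e_{14} + b\,e_{18},
\]
and row 5 lies in the span of the remaining ten rows precisely when $d$ does. I would then reduce $d$ against the pivots in columns 10 and 11, which are rows 6 and 7, by adding $a$ times row 6 and $b$ times row 7. This clears the entries in columns 10 and 11 and yields the residual
\[
d' = a\,e_{14} + a^2\,e_{15} + ab\,e_{16} + b\,e_{18} + ab\,e_{19} + b^2\,e_{20}.
\]
Crucially, all these row operations multiply rows only by elements of $P$ (no division), so the reduction is valid for every specialization of $a,b$ in $\mathbb{F}$.

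The conclusion then follows by inspecting the columns in which $d'$ is supported. The only pivot columns among $14,15,16,18,19,20$ are $15$ and $20$ (rows 10 and 11), and reducing $d'$ by those pivots alters only entries in columns $15,16,17$ and $20,21,22$ respectively; in particular it never touches columns $14$ and $18$. Thus after full reduction the entries of the residual in columns $14$ and $18$ remain exactly $a$ and $b$. Therefore the fully reduced residual vanishes if and only if $a = b = 0$: when $(a,b) \neq (0,0)$ the leftmost surviving entry sits in column $14$ (if $a \neq 0$) or column $18$ (if $a = 0$, $b \neq 0$), and since neither column carries a leading $1$ of the original matrix, row 5 contributes an eleventh independent pivot and the rank is $11$; when $a = b = 0$ we have $d = 0$, row 5 is redundant, and the rank is $10$.

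The only point requiring genuine care is the purely combinatorial verification that rows 4 and 5 furnish the \emph{only} collision among the leading $1$s and that columns $14$ and $18$ are \emph{not} leading-$1$ columns, so that the residual $d'$ is a bona fide new row rather than one absorbed by existing pivots; both facts are immediate from Table~\ref{rank1case2arity10}. In contrast with case 1, where the rank-drop locus was cut out by a nontrivial zero-dimensional ideal requiring a Gr\"obner basis, here the relevant entries generate only the maximal ideal $(a,b)\subseteq P$, so the vanishing locus is the single point $(0,0)$ and the analysis terminates after one reduction step.
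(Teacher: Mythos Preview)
Your proof is correct and follows essentially the same row-reduction approach as the paper: both isolate the single collision between rows 4 and 5, reduce the difference against the available pivots, and observe that the surviving entries generate the ideal $(a,b)$. Your presentation is slightly more efficient in that you stop once you see columns $14$ and $18$ cannot be cleared, whereas the paper displays the full canonical form of the matrix before reading off the same ideal from the residual row.
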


\begin{proof}
We compute a Gr\"obner basis for the submodule of $P^{22}$ generated by the rows of the matrix 
of Table \ref{rank1case2arity10} following the proof of Proposition \ref{rank1case1arity10theorem}, 
and obtain the following canonical form of the matrix:
\[
\left[
\begin{array}{c@{\;\;}c@{\;\;}c@{\;\;}c@{\;\;}c@{\;\;}c@{\;\;}c@{\;\;}c@{\;\;}c@{\;\;}c@{\;\;}c@{\;\;}c@{\;\;}c@{\;\;}c@{\;\;}c@{\;\;}c@{\;\;}c@{\;\;}c@{\;\;}c@{\;\;}c@{\;\;}c@{\;\;}c}
. & 1 & a & b & . & . & . & . & . & . & . & . & . & . & . & . & . & . & . & . & . & . \\[-2pt]
. & . & . & . & 1 & a & b & . & . & . & . & . & . & . & . & . & . & . & . & . & . & . \\[-2pt]
. & . & . & . & . & . & . & 1 & . & . & . & . & . & . & . & . & {-}a^2 & {-}a b & . & . & {-}a b & {-}b^2 \\[-2pt]
. & . & . & . & . & . & . & . & 1 & . & . & . & . & . & . & a^3{-}a b & a^2 b & . & {-}a b & . & b^2 a & b^3 \\[-2pt]
. & . & . & . & . & . & . & . & . & 1 & . & . & . & . & . & {-}a^2 & {-}a b & . & b & . & . & . \\[-2pt]
. & . & . & . & . & . & . & . & . & . & 1 & . & . & . & . & a & . & . & . & . & {-}a b & {-}b^2 \\[-2pt]
. & . & . & . & . & . & . & . & . & . & . & 1 & . & . & . & . & a & . & . & . & b & . \\[-2pt]
. & . & . & . & . & . & . & . & . & . & . & . & 1 & . & . & . & . & a & . & . & . & b \\[-2pt]
. & . & . & . & . & . & . & . & . & . & . & . & . & a & . & {-}a^3{+}a b & {-}a^2 b & b & a b & . & {-}b^2 a & {-}b^3 \\[-2pt]
. & . & . & . & . & . & . & . & . & . & . & . & . & . & 1 & a & b & . & . & . & . & . \\[-2pt]
. & . & . & . & . & . & . & . & . & . & . & . & . & . & . & . & . & . & . & 1 & a & b
\end{array}
\right]
\]
The only row whose leading entry is not 1 is row 9, and the nonzero elements of row 9 are 
$a$, ${-}a^3{+}a b$, ${-}a^2 b$, $b$, $a b$, ${-}b^2 a$, ${-}b^3$, which generate the ideal whose 
deglex Gr\"obner basis is $\{ a, b \}$ and whose zero set is the single point $\{ (0,0) \}$.
This completes the proof.
\end{proof}

\subsection{Case 3}

For rank 1, case 3, the quadratic relation matrix is
\[
[R]
=
\left[ 
\begin{array}{cccc} 
0 & 0 & 1 & a
\end{array} 
\right]
\]
The row module $\calI(7)$ of $[R]$ over $P = \mathbb{F}[a]$ is generated by the relation
  \[
  \rho = 
  \phi \circ_3 \phi + a \, \phi \circ_4 \phi.
  \]
In this case $P$ is a PID so we compute the HNF of the matrix of consequences.

\begin{table}[ht]
\[
\left[
\begin{array}{cccccccccccccccccccccc}
. & . & 1 & a & . & . & . & . & . & . & . & . & . & . & . & . & . & . & . & . & . & . \\[-2pt] 
. & . & . & . & . & 1 & a & . & . & . & . & . & . & . & . & . & . & . & . & . & . & . \\[-2pt] 
. & . & . & . & . & . & . & . & . & 1 & a & . & . & . & . & . & . & . & . & . & . & . \\[-2pt] 
. & . & . & . & . & . & . & . & . & . & . & 1 & a & . & . & . & . & . & . & . & . & . \\[-2pt] 
. & . & . & . & . & . & . & . & . & . & . & . & . & 1 & . & . & . & a & . & . & . & . \\[-2pt] 
. & . & . & . & . & . & . & . & . & . & . & . & . & . & 1 & . & . & . & a & . & . & . \\[-2pt] 
. & . & . & . & . & . & . & . & . & . & . & . & . & . & . & 1 & a & . & . & . & . & . \\[-2pt] 
. & . & . & . & . & . & . & . & . & . & . & . & . & . & . & 1 & . & . & . & a & . & . \\[-2pt] 
. & . & . & . & . & . & . & . & . & . & . & . & . & . & . & . & 1 & . & . & . & a & . \\[-2pt] 
. & . & . & . & . & . & . & . & . & . & . & . & . & . & . & . & . & 1 & . & . & . & a \\[-2pt] 
. & . & . & . & . & . & . & . & . & . & . & . & . & . & . & . & . & . & . & . & 1 & a
\end{array}
\right]
\]
\vspace{-4mm}
\caption{Rank 1, case 3: matrix of consequences in arity 10}
\label{rank1case3arity10}
\end{table}

\begin{proposition}
The matrix whose row module $\calI(10)$ over $P$ is generated by the consequences of $[R]$ is given 
in Table \ref{rank1case3arity10}.
This matrix has rank either 10 or 11; the rank is 10 if and only if $a = 0$.
\end{proposition}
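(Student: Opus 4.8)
Since $P = \mathbb{F}[a]$ is a PID, the plan is to compute the HNF of the $11 \times 22$ matrix of Table~\ref{rank1case3arity10} by elementary row operations, following the proof of Proposition~\ref{rank1case1arity10theorem}, and then read off the rank. The first step is to record the leading positions of the eleven rows: scanning from left to right, these lie in columns $3$, $6$, $10$, $12$, $14$, $15$, $16$, $16$, $17$, $18$, $21$. Every leading entry equals $1$, and the leading positions are pairwise distinct except that rows $7$ and $8$ both begin in column $16$. Thus ten of the rows immediately furnish unit pivots in ten distinct columns, and the entire question of whether the rank is $10$ or $11$ is concentrated in this single redundancy.

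The key step is to resolve the collision in column $16$. Subtracting row $7$ from row $8$ cancels the leading $1$ and leaves a row supported on columns $17$ and $20$, with entries $-a$ and $a$. I would then clear this row against the unit pivots already present: adding $a$ times row $9$ (whose pivot is in column $17$) removes the column-$17$ entry and introduces $a^2$ in column $21$, and subtracting $a^2$ times row $11$ (whose pivot is in column $21$) removes that in turn and introduces $-a^3$ in column $22$. What survives is a row supported only on columns $20$ and $22$, with entries $a$ and $-a^3$. Since no row other than row $8$ has a nonzero entry in column $20$, this reduced row is a genuine eleventh pivot with leading coefficient $a$.

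The conclusion is then immediate: the ten unit pivots always contribute rank $10$, and the reduced row raises the rank to $11$ exactly when its leading coefficient $a$ is nonzero, so the rank equals $11$ for $a \neq 0$ and $10$ for $a = 0$, as claimed. I anticipate no real obstacle here. Because all but one of the pivots are units, no genuine GCD step or Gr\"obner computation is required, and the argument amounts to the elementary bookkeeping of tracking a single row through two clearing operations. The one point deserving care is to confirm that column $20$ supports no other pivot, so that the surviving entry $a$ is not accidentally eliminated during the reduction and does indeed govern the rank.
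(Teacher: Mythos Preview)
Your proposal is correct and follows essentially the same approach as the paper: the paper also observes the duplicate leading $1$ in column $16$, computes the HNF, and finds that the only row without a unit pivot is supported on columns $20$ and $22$ with entries $a$ and $-a^3$, whence the rank drops to $10$ precisely when $a=0$. Your explicit chain of row operations ($R_8 - R_7$, then clearing against rows $9$ and $11$) reproduces exactly this reduced row, so the arguments coincide.
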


\begin{proof}
Column 16 contains two leading 1s which suggests that the matrix does not always have full rank.
The HNF of the matrix is as follows:
\[
\left[
\begin{array}{cccccccccccccccccccccc}
. & . & 1 & a & . & . & . & . & . & . & . & . & . & . & . & . & . & . & . & . & . & . \\[-2pt]
. & . & . & . & . & 1 & a & . & . & . & . & . & . & . & . & . & . & . & . & . & . & . \\[-2pt]
. & . & . & . & . & . & . & . & . & 1 & a & . & . & . & . & . & . & . & . & . & . & . \\[-2pt]
. & . & . & . & . & . & . & . & . & . & . & 1 & a & . & . & . & . & . & . & . & . & . \\[-2pt]
. & . & . & . & . & . & . & . & . & . & . & . & . & 1 & . & . & . & . & . & . & . & -a^2 \\[-2pt]
. & . & . & . & . & . & . & . & . & . & . & . & . & . & 1 & . & . & . & a & . & . & . \\[-2pt]
. & . & . & . & . & . & . & . & . & . & . & . & . & . & . & 1 & . & . & . & . & . & a^3 \\[-2pt]
. & . & . & . & . & . & . & . & . & . & . & . & . & . & . & . & 1 & . & . & . & . & -a^2 \\[-2pt]
. & . & . & . & . & . & . & . & . & . & . & . & . & . & . & . & . & 1 & . & . & . & a \\[-2pt]
. & . & . & . & . & . & . & . & . & . & . & . & . & . & . & . & . & . & . & a & . & -a^3 \\[-2pt]
. & . & . & . & . & . & . & . & . & . & . & . & . & . & . & . & . & . & . & . & 1 & a
\end{array}
\right]
\]
Row 10 is the only row which does not have a leading 1, and its nonzero entries are $a$, $-a^3$.
This completes the proof.
\end{proof}

\subsection{Case 4}

For rank 1, case 4, the quadratic relation matrix is
\[
[R]
=
\left[ 
\begin{array}{cccc} 
0 & 0 & 0 & 1
\end{array} 
\right]
\]
The row module $\calI(7)$ of $[R]$ over $P = \mathbb{F}$ is generated by the relation
  \[
  \rho = 
  \phi \circ_4 \phi.
  \]
In this case $P$ is a field so we can find the rank of the matrix of consequences by computing its RCF.

\begin{table}[h!]
\[
\left[
\begin{array}{cccccccccccccccccccccc}
. & . & . & 1 & . & . & . & . & . & . & . & . & . & . & . & . & . & . & . & . & . & . \\[-2pt] 
. & . & . & . & . & . & 1 & . & . & . & . & . & . & . & . & . & . & . & . & . & . & . \\[-2pt] 
. & . & . & . & . & . & . & . & . & . & 1 & . & . & . & . & . & . & . & . & . & . & . \\[-2pt] 
. & . & . & . & . & . & . & . & . & . & . & . & 1 & . & . & . & . & . & . & . & . & . \\[-2pt] 
. & . & . & . & . & . & . & . & . & . & . & . & . & . & . & . & 1 & . & . & . & . & . \\[-2pt] 
. & . & . & . & . & . & . & . & . & . & . & . & . & . & . & . & . & 1 & . & . & . & . \\[-2pt] 
. & . & . & . & . & . & . & . & . & . & . & . & . & . & . & . & . & . & 1 & . & . & . \\[-2pt] 
. & . & . & . & . & . & . & . & . & . & . & . & . & . & . & . & . & . & . & 1 & . & . \\[-2pt] 
. & . & . & . & . & . & . & . & . & . & . & . & . & . & . & . & . & . & . & . & 1 & . \\[-2pt] 
. & . & . & . & . & . & . & . & . & . & . & . & . & . & . & . & . & . & . & . & . & 1
\end{array}
\right]
\]
\vspace{-4mm}
\caption{Rank 1, case 4: matrix of consequences in arity 10}
\label{rank1case4arity10}
\end{table}

\begin{proposition}
Table \ref{rank1case4arity10} gives the matrix whose row space $\calI(10)$ over $P$ is the space of 
consequences in arity 10.
This matrix is in RCF and has rank 10.
\end{proposition}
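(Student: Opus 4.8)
The plan is to exploit the fact that here $P = \mathbb{F}$ is a field, so that no Gr\"obner basis machinery is needed and the rank can be read directly off the row canonical form. The decisive observation is that the generating relation $\rho = \phi \circ_4 \phi$ is a single tree monomial (the fourth basis element of $\calO(7)$) rather than a genuine linear combination. Consequently each of its eleven consequences --- the seven right compositions $\rho \circ_i \phi$ and the four left compositions $\phi \circ_j \rho$ --- is again a single tree monomial, hence a standard basis vector of the free $P$-module of rank $22$ indexed by Table \ref{arity10basis}. The rank of the consequence matrix therefore equals the number of \emph{distinct} monomials that occur among these eleven, and the whole problem reduces to counting coincidences in a list of eleven basis indices.

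First I would enumerate the eleven consequences explicitly. The seven right compositions $(\phi \circ_4 \phi) \circ_i \phi$ split according to whether $i \le 3$ (substitution into an argument of the root, producing the two-branch monomials $(\phi \circ_4 \phi) \circ_i \phi$ indexed $7, 13, 18$) or $i \ge 4$ (substitution into an argument of the inner $\phi$, producing the nested monomials $\phi \circ_4 (\phi \circ_{i-3} \phi)$ indexed $19, 20, 21, 22$). The four left compositions $\phi \circ_j (\phi \circ_4 \phi)$ produce the monomials indexed $4, 11, 17, 22$. This accounts for all eleven consequences.

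The key step is to notice that exactly one coincidence occurs: the index $22$ arises twice, once as $\rho \circ_7 \phi$ and once as $\phi \circ_4 \rho$. This is precisely an instance of the operadic (sequential) associativity axiom, namely $(\phi \circ_4 \phi) \circ_7 \phi = \phi \circ_4 (\phi \circ_4 \phi)$, since substituting a fresh $\phi$ into the last leaf of the inner $\phi$ on the left is the same tree as nesting a fresh $\phi$ into the fourth slot on the right. All remaining nine indices are pairwise distinct, so the eleven consequences collapse to exactly ten distinct basis monomials, those indexed $4, 7, 11, 13, 17, 18, 19, 20, 21, 22$.

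Finally I would assemble the matrix. Since every surviving row is a distinct standard basis vector, sorting the rows so that their single leading $1$s advance from left to right yields a matrix in which each row has a leading $1$ and no other nonzero entry, with distinct rows having their leading $1$s in distinct columns; this is by definition already in RCF. Reading off the ten leading columns reproduces Table \ref{rank1case4arity10} and shows the rank is $10$. There is essentially no analytic obstacle in this case; the only point requiring care is confirming that the single monomial coincidence is forced by associativity and that no further collisions occur, which the explicit enumeration settles once and for all.
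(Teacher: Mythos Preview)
Your argument is correct and matches the paper's approach: the paper in fact gives no separate proof for this proposition, relying on the displayed table being visibly in RCF with ten rows. Your enumeration supplies precisely the omitted verification, including the identification of the single duplicated consequence $(\phi \circ_4 \phi) \circ_7 \phi = \phi \circ_4 (\phi \circ_4 \phi)$ via sequential associativity, and your list of indices $4,7,11,13,17,18,19,20,21,22$ agrees exactly with the leading columns in Table~\ref{rank1case4arity10}.
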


\subsection{Summary for quadratic relation rank 1}

\begin{theorem}
For a 1-dimensional space of quadratic relations in arity 7, the rank of the cubic consequences 
in arity 10 is either 10 or 11.
None of the corresponding operads are nilpotent of index 3.
For the subspaces spanned by the following 8 relations, the rank is 10:
  \begin{align*}
  1a) \qquad 
  \rho &= 
  \phi \circ_1 \phi,
  \\
  1b) \qquad 
  \rho &= 
  \phi \circ_1 \phi - \phi \circ_4 \phi,
  \\
  1c) \qquad 
  \rho &= 
  \phi \circ_1 \phi - \phi \circ_2 \phi + \phi \circ_3 \phi - \phi \circ_4 \phi,
  \\
  1d) \qquad 
  \rho &= 
  \phi \circ_1 \phi + \alpha \, \phi \circ_2 \phi + \alpha^2 \, \phi \circ_3 \phi - \phi \circ_4 \phi
  \quad (\alpha^2 - \alpha + 1 = 0),
  \\
  2) \qquad 
  \rho &= 
  \phi \circ_2 \phi,
  \\
  3) \qquad 
  \rho &= 
  \phi \circ_3 \phi,
  \\
  4) \qquad 
  \rho &= 
  \phi \circ_4 \phi.
  \end{align*}
For all other 1-dimensional subspaces, the rank is 11.
\end{theorem}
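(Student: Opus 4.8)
The plan is to assemble this theorem directly from the four propositions already established for the individual cases, since no new computation is needed. A rank-$1$ subspace $R \subseteq \calO(7)$ is a single nonzero row in row canonical form, so its leading $1$ occupies exactly one of the four columns; these are precisely the four cases displayed in the $r=1$ row of Table~\ref{table[R]}, and $\binom{4}{1}=4$ confirms the enumeration is complete. The first step is therefore to collect the four case results and check that together they cover every $1$-dimensional $R$.

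Next I would record the uniform dichotomy. In each case the consequence matrix has exactly $11$ rows, and the reductions in the four proofs show that after computing the canonical form at most one row fails to acquire a leading $1$; hence $\dim\calI(10)$ is always $10$ or $11$ and never exceeds $11$. This settles the nilpotency assertion at once: $\dim\calQ(10) = 22 - \dim\calI(10) \in \{11,12\}$, which is strictly positive, so $\calQ(10)\neq 0$ and no such operad can be nilpotent of index $3$ (which would require $\calI(10) = \calO(10)$, i.e. rank $22$).

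The third step is to translate the rank-$10$ parameter loci into the explicit generators. From Proposition~\ref{rank1case1arity10theorem}, case~$1$ gives rank $10$ exactly at $(a,b,c) = (0,0,0),\,(0,0,-1),\,(-1,1,-1)$, and $(\alpha,\alpha^2,-1)$ with $\alpha^2-\alpha+1=0$; substituting these into $\rho = \phi\circ_1\phi + a\,\phi\circ_2\phi + b\,\phi\circ_3\phi + c\,\phi\circ_4\phi$ yields relations $1a$ through $1d$, where $1d$ in fact represents two distinct subspaces corresponding to the two (conjugate) primitive sixth roots of unity. The case-$2$ and case-$3$ propositions give rank $10$ only at $a=b=0$ and $a=0$, producing $\rho=\phi\circ_2\phi$ and $\rho=\phi\circ_3\phi$, while case~$4$ is parameter-free with rank $10$ for $\rho=\phi\circ_4\phi$. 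Counting $1a,1b,1c$, the two members of the family $1d$, and relations $2,3,4$ gives the stated total of $8$. Every remaining parameter value falls under the rank-$11$ conclusion of the corresponding proposition.

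Because all four propositions are already proved, the only work is bookkeeping, so I do not expect a genuine obstacle. The one point I would verify carefully is that the four leading-column cases are mutually exclusive and jointly exhaustive, and that each rank-$10$ solution set maps correctly onto one of the eight listed relations; in particular I would confirm, using $\alpha^2=\alpha-1$, that the parametric solution $(\alpha,\alpha^2,-1)$ of case~$1$ agrees with the coefficients written in relation $1d$. This is routine checking rather than a real difficulty.
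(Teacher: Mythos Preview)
Your proposal is correct and matches the paper's approach exactly: the theorem is stated as a summary without a separate proof, since it is simply the aggregation of the four case propositions, and your bookkeeping (exhaustiveness of the four leading-column cases, translation of the rank-$10$ parameter loci into the eight listed relations, and the observation that rank $\le 11 < 22$ rules out nilpotency of index~$3$) is precisely what is needed to read off the result.
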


\begin{remark}
Relation 1$c$) and its generalization to all even $n$ (the alternating sum over all quadratic monomials) was discovered 
by Gnedbaye \cite{Gnedbaye} in his study of Koszul duality for $n$-ary operads.
The analogous relation for odd $n$ is the sum over all quadratic monomials.
\end{remark}


\section{Quadratic Relation Rank 2}

\subsection{Case 1}

For rank 2, case 1, the quadratic relation matrix is
\[
[R]
=
\left[ 
\begin{array}{cccc} 
1 & 0 & a & b \\
0 & 1 & c & d 
\end{array} 
\right]
\]
Each row of this matrix has 11 cubic consequences which are linear combinations of the 22 monomials
in Table \ref{arity10basis}.

\begin{lemma}
The $22 \times 22$ matrix over $\mathbb{F}[a,b,c,d]$ whose rows generate the module of cubic consequences
of the quadratic relations $[R]$ is given in Table \ref{rank2case1arity10}; 
the rows have been sorted so the leading 1s go from left to right.
\end{lemma}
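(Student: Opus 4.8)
The plan is to carry out the composition computation explicitly, exactly as in the rank-$1$ cases, and then record and sort the resulting rows. First I would read off the two generating relations from the rows of $[R]$, namely
\[
\rho_1 = \phi \circ_1 \phi + a\,\phi \circ_3 \phi + b\,\phi \circ_4 \phi,
\qquad
\rho_2 = \phi \circ_2 \phi + c\,\phi \circ_3 \phi + d\,\phi \circ_4 \phi,
\]
using the ordered basis $\{\phi \circ_i \phi\}$ of $\calO(7)$. Each $\rho_k$ produces $11$ consequences, namely $\rho_k \circ_i \phi$ for $1 \le i \le 7$ together with $\phi \circ_j \rho_k$ for $1 \le j \le 4$, giving $22$ rows in total. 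Since $\dim \calO(10) = 22$, the matrix is square.

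The substance of the computation is to rewrite each composite tree in terms of the $22$ normal-form monomials of Table \ref{arity10basis}. The left compositions $\phi \circ_j \rho_k$ need no reduction: substituting $\rho_k$ into the $j$-th argument of $\phi$ directly produces a linear combination of the nested monomials $\phi \circ_j ( \phi \circ_i \phi )$, each already a basis element. The right compositions $\rho_k \circ_i \phi$ require the operad axioms, and split into three cases according to where the inserted $\phi$ lands relative to the inner block of leaves of a term $\phi \circ_m \phi$ (the leaves $m, \dots, m{+}3$). If $i$ lies inside this block, sequential associativity yields the nested form $\phi \circ_m ( \phi \circ_{i-m+1} \phi )$. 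If $i$ lies strictly to the left ($i < m$) or strictly to the right ($i > m{+}3$), the two compositions are parallel: the inserted $\phi$ occupies outer position $i$ or $i{-}3$ respectively, and the parallel axiom lets me rewrite the result in the canonical order $( \phi \circ_{q} \phi ) \circ_{p} \phi$ with $p < q$, where $p,q$ are the two outer positions occupied by the copies of $\phi$. This is exactly the pattern already displayed in Table \ref{rank1case1consequences}, and it fixes every coefficient vector.

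Having expressed all $22$ consequences in the basis, I would assemble their coefficient vectors into a matrix and permute the rows so that the leading $1$s increase from left to right, obtaining Table \ref{rank2case1arity10}. I expect the only real difficulty to be clerical rather than conceptual: correctly tracking the shifted inner index $i - m + 1$ in the sequential case, and correctly computing the outer positions (with the $i \mapsto i{-}3$ shift on the right) before transposing to the canonical $p < q$ order in the parallel case, across all $14$ right compositions and both relations. No Gr\"obner-basis machinery enters at this stage; the lemma is precisely the tabulation of these consequences, and verifying it reduces to checking the $22$ rows entry by entry against these composition rules.
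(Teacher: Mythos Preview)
Your proposal is correct and matches the paper's approach: the lemma is stated in the paper without proof because it is a direct tabulation of the $22$ consequences, computed exactly as you describe by following the rank-$1$ template of Table~\ref{rank1case1consequences}. Your identification of $\rho_1$ and $\rho_2$, the split into left and right compositions, and the three subcases for $\rho_k \circ_i \phi$ (inner, left-parallel, right-parallel with the appropriate index shifts) are all correct, and no further machinery is needed.
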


\begin{table}[ht]
\[
\left[
\begin{array}{cccccccccccccccccccccc}
1 & . & a & b & . & . & . & . & . & . & . & . & . & . & . & . & . & . & . & . & . & . \\[-2pt] 
1 & . & . & . & . & a & b & . & . & . & . & . & . & . & . & . & . & . & . & . & . & . \\[-2pt] 
. & 1 & c & d & . & . & . & . & . & . & . & . & . & . & . & . & . & . & . & . & . & . \\[-2pt] 
. & 1 & . & . & . & . & . & . & . & . & . & a & b & . & . & . & . & . & . & . & . & . \\[-2pt] 
. & . & 1 & . & . & . & . & . & . & . & . & . & . & a & . & . & . & b & . & . & . & . \\[-2pt] 
. & . & . & 1 & . & . & . & . & . & . & . & . & . & . & a & . & . & . & b & . & . & . \\[-2pt] 
. & . & . & . & 1 & c & d & . & . & . & . & . & . & . & . & . & . & . & . & . & . & . \\[-2pt] 
. & . & . & . & 1 & . & . & . & . & . & . & . & . & . & . & a & . & . & . & b & . & . \\[-2pt] 
. & . & . & . & . & 1 & . & . & . & . & . & . & . & . & . & . & a & . & . & . & b & . \\[-2pt] 
. & . & . & . & . & . & 1 & . & . & . & . & . & . & . & . & . & . & a & . & . & . & b \\[-2pt] 
. & . & . & . & . & . & . & 1 & . & a & b & . & . & . & . & . & . & . & . & . & . & . \\[-2pt] 
. & . & . & . & . & . & . & 1 & . & . & . & c & d & . & . & . & . & . & . & . & . & . \\[-2pt] 
. & . & . & . & . & . & . & . & 1 & c & d & . & . & . & . & . & . & . & . & . & . & . \\[-2pt] 
. & . & . & . & . & . & . & . & 1 & . & . & . & . & c & . & . & . & d & . & . & . & . \\[-2pt] 
. & . & . & . & . & . & . & . & . & 1 & . & . & . & . & c & . & . & . & d & . & . & . \\[-2pt] 
. & . & . & . & . & . & . & . & . & . & 1 & . & . & . & . & c & . & . & . & d & . & . \\[-2pt] 
. & . & . & . & . & . & . & . & . & . & . & 1 & . & . & . & . & c & . & . & . & d & . \\[-2pt] 
. & . & . & . & . & . & . & . & . & . & . & . & 1 & . & . & . & . & c & . & . & . & d \\[-2pt] 
. & . & . & . & . & . & . & . & . & . & . & . & . & 1 & . & a & b & . & . & . & . & . \\[-2pt] 
. & . & . & . & . & . & . & . & . & . & . & . & . & . & 1 & c & d & . & . & . & . & . \\[-2pt] 
. & . & . & . & . & . & . & . & . & . & . & . & . & . & . & . & . & . & 1 & . & a & b \\[-2pt] 
. & . & . & . & . & . & . & . & . & . & . & . & . & . & . & . & . & . & . & 1 & c & d
\end{array}
\right]
\]
\vspace{-4mm}
\caption{Rank 2, case 1: matrix of consequences in arity 10}
\label{rank2case1arity10}
\end{table}

In this case, to compute a reduced form of the matrix, we use the algorithm for the ``partial Smith
form'' discussed in \cite[Chapter 8]{BD} and reproduced below.
This algorithm uses elementary row and column operations to reduce an $m \times n$ matrix $A$ over a 
polynomial ring $P$ (with coefficients in $\mathbb{F}$) containing many nonzero scalar entries to a 
block diagonal matrix with an upper left identity matrix $I_r$ measuring the minimal rank $r$ of $A$ 
and a lower right block $B$ which is typically much smaller than $A$ and contains all the information 
needed to understand how the rank of $A$ depends on the parameters in $P$.
Row operations replace one generating set for the submodule generated by the $m$ rows of $A$ by another 
generating set.
Column operations replace one basis for the free $P$-module $P^n$ by another basis.
In both cases, the determinant of the transformation is a nonzero element of $\mathbb{F}$.
The output of this algorithm is not a canonical form of the matrix $A$, but it has the information we
need in a more compact form than the Gr\"obner basis for the submodule generated by the rows of $A$;
see Remark \ref{rank2case1remark} for further information about Gr\"obner bases for modules.

\begin{table}[ht]
\begin{algorithm} 
\emph{Input}: An $m \times n$ matrix $A = ( a_{ij} )$ with entries in $\mathbb{F}[x_1,\dots,x_k]$.
\emph{Output}: An $m \times n$ block diagonal matrix $C = \mathrm{diag}(I_r,B)$ such that:
  \begin{itemize}
  \item
  $C = UAV$ where $U$ ($m \times m$) and $V$ ($n \times n$) are products of elementary matrices over
  $\mathbb{F}[x_1,\dots,x_k]$ and hence $\det(U), \det(V) \in \mathbb{F} \setminus \{0\}$;
  in other words, $C$ is row-column equivalent to $A$.
  \item
  $I_r$ is the $r \times r$ identity matrix where $r$ is the minimal rank of $A$ over all values of 
  $x_1,\dots,x_k$:
    \[
    r = \min \{ \; \mathrm{rank} ( A|_{x_1=a_1,\dots,x_k=a_k} ) \mid (a_1,\dots,a_k) \in \mathbb{F}^k \; \}.
    \]
  \item
  $B$ is an $(m{-}r) \times (n{-}r)$ matrix over $\mathbb{F}[x_1,\dots,x_k]$ such that
    \[
    \mathrm{rank} ( A|_{x_1=a_1,\dots,x_k=a_k} ) = r + \mathrm{rank} ( B|_{x_1=a_1,\dots,x_k=a_k} ),
    \;
    \forall (a_1,\dots,a_k) \in \mathbb{F}^k.
    \]
  \end{itemize}
\begin{mdframed}
  \begin{enumerate}
  \item
  Set $r \leftarrow 1$.
  \item
  While some $a_{ij} \in \mathbb{F} \setminus \{0\}$ for $i, j \ge r$ do:
    \begin{enumerate}
    \item
    Set $i \leftarrow r$.
    While $a_{ij} \notin \mathbb{F} \setminus \{0\}$ for all $j \ge r$ do: set $i \leftarrow i + 1$.
    \item
    If $i \ne r$ then interchange rows $i$ and $r$.
    \item
    Set $j \leftarrow r$.
    While $a_{rj} \notin \mathbb{F} \setminus \{0\}$ do: set $j \leftarrow j + 1$.
    \item
    If $j \ne r$ then interchange columns $j$ and $r$.
    \item
    If $a_{rr} \ne 1$ then multiply row $r$ by $1/a_{rr}$. [\emph{Create new diagonal 1.}]
    \item
    For $i$ from $r+1$ to $m$ do:
      \begin{enumerate}
      \item[-]
      If $a_{ir} \ne 0$ then add $-a_{ir}$ times row $r$ to row $i$.
      \end{enumerate}
    \item
    For $j$ from $r+1$ to $n$ do:
      \begin{enumerate}
      \item[-]
      If $a_{rj} \ne 0$ then add $-a_{rj}$ times column $r$ to column $j$.
      \end{enumerate}
    \item
    Set $r \leftarrow r + 1$.
    \end{enumerate}
  \item
  Set $r \leftarrow r - 1$.
  \end{enumerate}
\end{mdframed}
\end{algorithm}
\vspace{-3mm}
\caption{Partial Smith form of a matrix over a polynomial ring}
\label{psftable}
\end{table}

\begin{lemma} \label{Blemma}
We use the algorithm in Table \ref{psftable} to compute the partial Smith form of the matrix of consequences
in Table \ref{rank2case1arity10}.
We obtain the block diagonal matrix $\mathrm{diag}( I_{17}, B )$ where
\[
-B =
\left[
\begin{array}{c@{\;\;}c@{\;\;}c@{\;\;}c@{\;\;}c}
0 & a^2c+acd & abc+ad^2+ac & abd+ad & b^2d+bd \\ 
-d & c^3+ac-cd & c^2d+bc & acd+cd^2 & bcd+d^3 \\ 
0 & a^3+abc & a^2b+abd+a^2 & ab^2+ab & b^3+b^2 \\ 
-ad & -a & -ac & 0 & 0 \\ 
cd & ac^2-bc & acd+c^2 & a^2d+bcd+cd & abd+bd^2+d^2
\end{array}
\right]
\]
\end{lemma}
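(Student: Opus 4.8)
The plan is to run the partial Smith form algorithm of Table~\ref{psftable} directly on the matrix of Table~\ref{rank2case1arity10}, exploiting the fact that this matrix is already extremely close to a reduced form. Reading off the leading entries, exactly $17$ distinct columns carry a scalar leading $1$, namely columns $1$ through $15$ together with $19$ and $20$; the five remaining columns $16, 17, 18, 21, 22$ contain only entries from $\{0, a, b, c, d\}$. Among the $22$ rows, five of these leading columns are doubled, since columns $1, 2, 5, 8, 9$ each occur as the leading column of two rows, while the other twelve leading columns are used once. The first thing I would record is that every off-pivot nonzero entry lies in $\{a, b, c, d\}$, so the algorithm will locate a scalar unit at each of its first $17$ pivot positions and never has to divide by a non-unit; this is precisely the regime in which the algorithm performs well. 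Since there are $22$ columns and $17$ pivot columns, the residual block will be $5 \times 5$, supported on the five never-pivoted columns and on the five rows whose leading $1$ is eventually destroyed.

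Next I would carry out the $17$ pivots in the order the algorithm prescribes. At each step the chosen scalar $1$ is used to clear its column below by row operations and its row to the right by column operations. When a pivot is taken from one of the five doubled columns, subtracting the pivot row from its twin annihilates the twin's leading $1$ and sends polynomial fill-in to the right; these five twins are exactly the rows that, after all $17$ pivots, are left without a scalar leading entry. Correspondingly, the column operations funnel all fill-in into the five columns $16, 17, 18, 21, 22$ that never serve as pivot columns. Thus the algorithm terminates with $\mathrm{diag}(I_{17}, B)$, and to produce $B$ explicitly I would propagate only the effect of the pivots on these five surviving rows and five surviving columns, discarding the bulk of the matrix.

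The hard part will be the bookkeeping of the polynomial fill-in, not the conceptual structure. Because the row-clearing and column-clearing steps interact — a column operation performed for one pivot alters entries that a subsequent pivot's row operation then recombines — the residual entries accumulate as sums of products of up to three of the parameters, which is exactly why degree-three terms such as $c^3$ and $a^3$ appear in the stated matrix $-B$. Tracking this by hand across $17$ steps is error-prone, so in practice the computation is carried out by machine (in Maple, as noted); a human-checkable version reduces the labour by carrying only the five surviving rows and columns through the fixed pivot order. A final verification I would make is that the resulting $B$ has no entry in $\mathbb{F} \setminus \{0\}$: every entry is either $0$ or a genuine polynomial of degree at least one, so the algorithm halts at this stage and the minimal rank is indeed $r = 17$. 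Reading off the accumulated entries then yields the claimed expression for $-B$, completing the proof.
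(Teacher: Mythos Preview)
Your proposal is correct and follows essentially the same approach as the paper: both run the partial Smith form algorithm of Table~\ref{psftable} on the matrix of Table~\ref{rank2case1arity10} through $17$ iterations, leaving a $5\times 5$ residual block. Your identification of the pivot columns $\{1,\dots,15,19,20\}$, the doubled leading columns $\{1,2,5,8,9\}$, and the surviving columns $\{16,17,18,21,22\}$ is accurate and gives more structural insight than the paper's proof, which simply records the explicit list of $87$ row and column operations (Table~\ref{87operations}) and defers verification to that list; both accounts ultimately rely on machine computation for the polynomial bookkeeping.
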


\begin{proof}
The algorithm in Table \ref{psftable} requires $r = 17$ iterations and 87 row/column operations;
see Table \ref{87operations}.
The result is an upper left identity matrix of size 17 and (the negative of) the $5 \times 5$ 
lower right block displayed above.
\end{proof}

\begin{table}[ht]
\[
\boxed{
\begin{array}{r@{\;\;}l@{\;}l@{\;}l@{\;}l@{\;}l}
%
%
1) & 
R_{2} - R_{1}, & 
C_{3} -a C_{1}, & 
C_{4} -b C_{1},   
\\ 
%
%
2) &
R_{3} \leftrightarrow R_{2}, & 
R_{4} - R_{2}, & 
C_{3} -c C_{2}, & 
C_{4} -d C_{2},   
\\
%
%
3) &
R_{5} \leftrightarrow R_{3}, & 
R_{4} +c R_{3}, & 
R_{5} +a R_{3}, & 
C_{14} -a C_{3}, & 
C_{18} -b C_{3},   
\\ 
%
%
4) &
R_{6} \leftrightarrow R_{4}, & 
R_{5} +b R_{4}, & 
R_{6} +d R_{4}, & 
C_{15} -a C_{4}, & 
C_{19} -b C_{4},   
\\ 
%
%
5) &
R_{7} \leftrightarrow R_{5}, & 
R_{8} - R_{5}, & 
C_{6} -c C_{5}, & 
C_{7} -d C_{5},   
\\
%
%
6) &
R_{9} \leftrightarrow R_{6}, & 
R_{7} -a R_{6}, & 
R_{8} +c R_{6}, & 
C_{17} -a C_{6}, & 
C_{21} -b C_{6},   
\\
%
%
7) &
R_{10} \leftrightarrow R_{7}, & 
R_{8} +d R_{7}, & 
R_{10} -b R_{7}, & 
C_{18} -a C_{7}, & 
C_{22} -b C_{7},   
\\
%
%
8) &
R_{11} \leftrightarrow R_{8}, & 
R_{12} - R_{8}, & 
C_{10} -a C_{8}, & 
C_{11} -b C_{8},   
\\
%
%
9) &
R_{13} \leftrightarrow R_{9}, & 
R_{14} - R_{9}, & 
C_{10} -c C_{9}, & 
C_{11} -d C_{9},   
\\ 
%
%
10) &
R_{15} \leftrightarrow R_{10}, & 
R_{12} +a R_{10}, & 
R_{14} +c R_{10}, & 
C_{15} -c C_{10}, & 
C_{19} -d C_{10},   
\\
%
%
11) &
R_{16} \leftrightarrow R_{11}, & 
R_{12} +b R_{11}, & 
R_{14} +d R_{11}, & 
C_{16} -c C_{11}, & 
C_{20} -d C_{11},   
\\
%
%
12) &
R_{17} \leftrightarrow R_{12}, & 
R_{13} -a R_{12}, & 
R_{17} -c R_{12}, & 
C_{17} -c C_{12}, & 
C_{21} -d C_{12},   
\\
%
%
13) &
R_{18} \leftrightarrow R_{13}, & 
R_{17} -d R_{13}, & 
R_{18} -b R_{13}, & 
C_{18} -c C_{13}, & 
C_{22} -d C_{13},   
\\
%
%
14) &
R_{19} \leftrightarrow R_{14}, & 
R_{15} -a^2 R_{14}, & 
R_{18} -ac R_{14}, & 
R_{19} -c R_{14}, & 
C_{16} -a C_{14},   
\\
& 
C_{17} -b C_{14},   
\\
%
%
15) &
R_{20} \leftrightarrow R_{15}, & 
R_{17} -ac R_{15}, & 
R_{18} -ad R_{15}, & 
R_{19} -c^2 R_{15}, & 
R_{20} -ab R_{15},   
\\
&
C_{16} -c C_{15}, & 
C_{17} -d C_{15},   
\\
%
%
16) &
R_{21} \leftrightarrow R_{16}, & 
C_{19} \leftrightarrow C_{16}, & 
R_{17} -ad R_{16}, & 
R_{18} -bd R_{16}, & 
R_{19} -cd R_{16},   
\\
&
R_{20} -b^2 R_{16}, & 
C_{21} -a C_{16}, & 
C_{22} -b C_{16},   
\\
%
%
17) &
R_{22} \leftrightarrow R_{17}, & 
C_{20} \leftrightarrow C_{17}, & 
R_{19} -d^2 R_{17}, & 
R_{21} -b R_{17}, & 
R_{22} -bd R_{17},   
\\
& 
C_{21} -c C_{17}, & 
C_{22} -d C_{17}.   
\end{array}
}
\]
\vspace{-4mm}
\caption{Row and column operations for proof of Lemma \ref{Blemma}}
\label{87operations}
\end{table}

\begin{proposition}
The only values of the parameters $(a,b,c,d)$ for which the matrix $B$ of Lemma \ref{Blemma} equals zero 
(and hence for which the matrix of consequences of Table \ref{rank2case1arity10} has minimal rank 17) 
are $(0,0,0,0)$ and $(0,-1,0,0)$.
\end{proposition}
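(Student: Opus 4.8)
The plan is to determine the common zero set in $\mathbb{F}^4$ of the $25$ polynomial entries of $B$, equivalently the zero set of the ideal $I \subseteq \mathbb{F}[a,b,c,d]$ that they generate, and to show it consists of exactly the two listed points. Since $B$ vanishes if and only if every one of its entries vanishes, I would treat each entry as a single equation of a polynomial system and look first for the entries that constrain the variables most cheaply.

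The key observation is that $-B$ already contains two entries that are (up to sign) single variables: the $(2,1)$ entry is $-d$ and the $(4,2)$ entry is $-a$. Hence $a \in I$ and $d \in I$, so every common zero must satisfy $a = 0$ and $d = 0$. This is the decisive simplification, since it collapses the bulk of the system without any genuine Gr\"obner-basis work.

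Next I would reduce the remaining entries modulo $a$ and $d$. Setting $a = d = 0$, every entry vanishes identically except those in positions $(2,2)$, $(2,3)$, $(3,5)$, $(5,2)$, $(5,3)$, which reduce respectively to $c^3$, $bc$, $b^2(b+1)$, $-bc$, and $c^2$. The entry $c^2$ forces $c = 0$; once $c = 0$ the entries $bc$ and $-bc$ vanish automatically, leaving only $b^2(b+1) = 0$, which forces $b = 0$ or $b = -1$. This produces exactly the two candidate solutions $(0,0,0,0)$ and $(0,-1,0,0)$. One way to phrase this cleanly is to note that, modulo $(a,d)$, the ideal $I$ restricts to $\langle c^2, bc, b^2(b+1) \rangle \subseteq \mathbb{F}[b,c]$, whose zero set is precisely $\{(b,c) = (0,0),\,(-1,0)\}$.

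Finally I would verify sufficiency by substituting each candidate back into all $25$ entries of $B$ to confirm that $B$ indeed vanishes there (both substitutions are immediate since $a = c = d = 0$ kills all but the $(3,5)$ entry, and $b^2(b+1) = 0$ at $b = 0$ and $b = -1$). I do not expect a real obstacle here: the two linear entries $-a$ and $-d$ make the system triangular in the order $a, d, c, b$, so the elimination is entirely sequential. The only mild point requiring care is the bookkeeping that confirms every entry, not merely the five used above, vanishes modulo $(a,d,c)$, so that no branch of solutions is accidentally discarded.
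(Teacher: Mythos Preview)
Your proof is correct and is essentially the paper's own argument carried out by hand: the paper states that the deglex Gr\"obner basis of the ideal generated by the entries of $B$ is $\{a,\,d,\,bc,\,c^2,\,b^2(b{+}1)\}$ and reads off the two solutions, which is exactly what you recover by first extracting the linear entries $-a$ and $-d$, reducing modulo $(a,d)$, and then handling $c^2$, $bc$, and $b^2(b{+}1)$.
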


\begin{proof}
The deglex Gr\"obner basis ($a \prec b \prec c \prec d$) 
for the ideal generated by the entries of $B$ is
$\{ a, d, bc, c^2, b^2(b+1) \}$ from which it follows immediately that $a = c = d = 0$ and $b \in \{ 0, -1 \}$.
\end{proof}

\begin{remark} \label{rank2case1remark}
Instead of computing the partial Smith form of the matrix in Table \ref{rank2case1arity10}, we could instead
compute its ``row canonical form'' by finding the Gr\"obner basis for the submodule generated by its rows.
However, we will see that this requires much more work and does not give us any more useful information.

For the first 15 columns, there is always an entry 1 at or below the pivot, so we easily create an identity
matrix of size 15 in the upper left corner.
When we reach column 16, the nonzero entries at or below the pivot are
  \[
  a, \quad -ac^2+bc, \quad -a^3-abc, \quad -a^2c-acd, \quad -c^3-ac+cd.
  \]
These entries generate an ideal with deglex Gr\"obner basis $\{ \, a, bc, c^3-cd \, \}$; this is easily
obtained from the generators using row operations since the first generator $a$ allows us to 
eliminate every term containing $a$ from the other generators.

The pivot is now at $(19,17)$; there are two nonzero entries at or below it:
  \[
  f = c^2ba+ca^3-dba-ba^2-a^2, \qquad g = dc^2a+c^2a^2-d^2a-cba-ca.
  \]
These entries generate an ideal whose deglex Gr\"obnex basis contains $f, g$ and
  \begin{align*}
  h &= dca^3+ca^4+cb^2a-2dba^2-ba^3+cba-da^2-a^3,
  \\
  k &= 2dcba^2-d^2a^3+cba^3-db^2a+dca^2-b^2a^2+ca^3-dba-2ba^2-a^2,
  \\
  \ell &= d^2a^4+cba^4+2cb^3a-3db^2a^2-b^2a^3+3cb^2a-3dba^2-ba^3+cba-da^2.
  \end{align*}
The existence of non-trivial S-polynomials means that to compute the Gr\"obner basis using row operations
we need to add zero rows to the matrix.
For example, $f$ and $g$ produce the S-polynomial $s = df - bg$ with reduced form $s' = s + af = h$.
In matrix terms, we create a row 23, perform $R_{23} + d R_{19}$ and $R_{23} - b R_{20}$ to create $s$
in row 23, perform $R_{23} + a R_{19}$ to create $h$ in row 23, and then do $R_{21} \leftrightarrow R_{23}$
to move $h$ to row 21, so that now $f, g, h$ occur in rows 19-21.
Further similar calculations produce the complete Gr\"obner basis for column 17; since the
Gr\"obner basis has five elements but the original generating set has only two, we need to add three
rows to the matrix.

After processing column 17 it remains only to deal with two more leading 1s in columns 19 and 20, and the
computation is complete.
The result is a $25 \times 22$ matrix; after deleting the rows and columns containing the leading 1s, we
are left with an $8 \times 5$ submatrix (analogous to the block $B$ of Lemma \ref{Blemma}).
\end{remark}
  
\begin{remark}
The ideal generated by the entries of $B$ is the first determinantal ideal of $B$.
In general, the $r$-th determinantal ideal of $B$, denoted $DI_r(B)$, is the ideal
generated by the $r \times r$ minors of $B$.
We write $V(DI_r(B))$ for the zero set of this ideal: all points in the parameter space $\mathbb{F}^k$
for which every $f \in DI_r(B)$ vanishes. 
Then $\mathrm{rank}(B) = r$ for $(a_1,\dots,a_k) \in \mathbb{F}^k$ if and only if 
$(a_1,\dots,a_k) \in V(DI_{r+1}(B)) \setminus V(DI_r(B))$.
\end{remark}

\begin{example} \label{rank2case1example}
The second determinantal ideal $DI_2(B)$ is generated by 87 distinct nonzero monic $2 \times 2$ minors
of degrees 4, 5, 6 with up to 11 terms and coefficients $\pm 1, \pm 2$.
Its deglex Gr\"obner basis has 18 elements of degrees 2 to 5 with up to 4 terms and coefficients $\pm 1$:
  \[
  \begin{array}{l}
  ad, \quad abc, \quad bcd, \quad ac(a^2{+}c), \quad a^2b(b{+}1), \quad a^2(c^2{+}ab{+}a), \quad 
  ab^2(b{+}1), 
  \\[1mm]
  ac(c^2{+}a), \quad b^2d(b{+}1), \quad bd^2(b{+}1), \quad cd(c^2{-}d), \quad d^2(c^2{-}bd{-}d), \quad 
  cd(d^2{+}c), 
  \\[1mm]
  a^2(a^2b{+}a^2{+}c), \quad b^3c(b{+}1), \quad b^2c^2(b{+}1), \quad d^2(bd^2{+}d^2{+}c), \quad 
  c(c^4{+}b^2c{-}a^2{-}d^2).
  \end{array}
  \]
The ideal $DI_2(B)$ is not zero-dimensional; removing the two points in $V(DI_1(B))$ from $V(DI_2(B))$, 
we are left with a 1-parameter family and 6 isolated points:
  \begin{align*}
  &
  V(DI_2(B)) \setminus V(DI_1(B))
  =
  \{ \, (0,-1,0,d) \mid d \in \mathbb{F}, \, d \ne 0 \, \} \, \cup \,
  \{ \,
  (-1,0,-1,0),  
  \\
  &
  (0,-1,-1,0), \,
  (0,0,-1,1), \,
  (0,-1,\alpha,0), \,
  (\alpha,0,\alpha^{-1},0), \,
  (0,0,\alpha,\alpha^2)
  \, \},
  \end{align*}
where $\alpha$ is a primitive 6th root of unity ($\alpha^2-\alpha+1=0$).
All of these parameter values produce rank exactly 18 for the matrix of consequences in arity 10.
\end{example}

\begin{proposition} \label{rank2case1nilpotentproposition}
The determinant of the block $B$ of Lemma \ref{Blemma} factors as follows,
where $\alpha$ is a primitive 6th root of unity:
  \begin{align*}
  \det(B)
  &=
  - a^2 d^2 (ad-bc)^2 (ad-bc-b-c-1)
  \times {}
  \\
  &\qquad
  \big( ad - bc - c + \alpha( b + 1 ) \big)
  \big( ad - bc - c + \alpha^{-1}( b + 1 ) \big).
  \end{align*}
For any quadruple $(a,b,c,d)$ of parameter values satisfying $\det(B) \ne 0$, the rank of the matrix of
consequences (Table \ref{rank2case1arity10}) is 22 which is $\dim\calO(10)$, and so the corresponding 
operads are nilpotent of index 3.
These operads form a Zariski dense subset of the parameter space $\mathbb{F}^4$.
For any quadruple $(a,b,c,d)$ satisfying $\det(B) = 0$, the rank of the matrix of consequences will be 
strictly less than 22.
(These operads may still be nilpotent, but of index strictly greater than 3.)
\end{proposition}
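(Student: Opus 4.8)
The plan is to route everything through the single polynomial $\det(B)$, using the partial Smith form of Lemma \ref{Blemma} to convert rank questions into (non)vanishing of this determinant, and then to separate the pure linear algebra from the operadic content. First I would invoke the defining property of the algorithm in Table \ref{psftable}: since the matrix of consequences $A$ (Table \ref{rank2case1arity10}) is row--column equivalent to $\mathrm{diag}(I_{17},B)$ with $B$ of size $5\times 5$, for every specialization $p=(a,b,c,d)\in\mathbb{F}^4$ we have $\mathrm{rank}(A|_p)=17+\mathrm{rank}(B|_p)$. Because $B$ is square of size $5$, we have $\mathrm{rank}(B|_p)=5$ exactly when $\det(B)|_p\ne 0$, and $\mathrm{rank}(B|_p)\le 4$ otherwise. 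Hence $\mathrm{rank}(A|_p)=22$ iff $\det(B)|_p\ne 0$, while $\det(B)|_p=0$ forces $\mathrm{rank}(A|_p)\le 21<22$; this already yields the final assertion of the proposition.

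Next I would compute $\det(B)$ by symbolic expansion of the $5\times 5$ matrix of Lemma \ref{Blemma} and factor the result over $\mathbb{F}[a,b,c,d]$, verifying that it equals the displayed product; in particular this shows $\det(B)$ is a nonzero polynomial, being a product of nonzero factors. I expect this expansion and factorization to be the main mechanical obstacle. The only subtle point is the pair of conjugate factors built from the primitive sixth root of unity $\alpha$: since $\alpha+\alpha^{-1}=1$ their product has coefficients in $\mathbb{F}$, and one checks that $-a^2d^2(ad-bc)^2(ad-bc-b-c-1)$ times $\big(ad-bc-c+\alpha(b+1)\big)\big(ad-bc-c+\alpha^{-1}(b+1)\big)$ reproduces the expanded determinant. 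A numerical cross-check, comparing $\mathrm{rank}(A|_p)$ with $22$ at several random points $p$, guards against transcription errors.

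It then remains to translate $\mathrm{rank}(A|_p)=22=\dim\calO(10)$ into nilpotency of index $3$. Full rank means the cubic consequences fill the whole of arity $10$, that is $\calI(10)=\calO(10)$ and $\calQ(10)=0$. I would propagate this upward using that $\calI$ is an operad ideal: given any tree monomial $T\in\calO(1+3w)$ with $w\ge 4$, choose an internal node of maximal depth; all its children are leaves, so contracting it to a single leaf yields a monomial $U\in\calO(1+3(w-1))$ with $T=U\circ_i\phi$ for the appropriate $i$. By induction $U\in\calI$, and right-closure of $\calI$ gives $T\in\calI$; the base case $w=3$ is $\calI(10)=\calO(10)$. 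Hence $\calQ(k)=0$ for all arities $k\ge 10$. Since the quadratic rank is $r=2<4$, the relations do not exhaust $\calO(7)$, so $\calQ(7)\ne 0$, and therefore $\calQ$ is nilpotent of index exactly $3$.

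Finally, the set $\{\,p\in\mathbb{F}^4 : \det(B)|_p\ne 0\,\}$ is the complement of the hypersurface $V(\det(B))$, hence Zariski-open, and it is nonempty because $\det(B)$ is a nonzero polynomial. Over the algebraically closed (in particular infinite) field $\mathbb{F}$ a nonempty Zariski-open subset of $\mathbb{F}^4$ is dense, so the index-$3$ nilpotent operads form a Zariski dense subset of the parameter space, which completes the plan.
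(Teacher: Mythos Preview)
Your proposal is correct and follows the same route as the paper: reduce the rank question to the (non)vanishing of $\det(B)$ via the partial Smith form $\mathrm{diag}(I_{17},B)$, and then read off the nilpotency and Zariski density statements. The paper's own proof is extremely terse (essentially ``$B$ is square, so $\mathrm{rank}(B)<5$ iff $\det(B)=0$; since $17+5=22=\dim\calO(10)$, the rest follows''), whereas you spell out the propagation $\calI(10)=\calO(10)\Rightarrow\calI(k)=\calO(k)$ for $k\ge10$, the check that $\calQ(7)\ne0$, and the Zariski density argument; these are exactly the details hidden in the paper's ``the rest follows.''
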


\begin{proof}
Since $B$ is a square matrix, its top determinantal ideal $DI_5(B)$ is the principal ideal generated by
its determinant, and $\mathrm{rank}(B) < 5$ if and only if $a,b,c,d$
satisfy $\det(B) = 0$.
Since $17 + 5 = 22 = \dim \calO(10)$, the rest follows.
\end{proof}

\begin{remark}
Proposition \ref{rank2case1nilpotentproposition} illustrates the general fact that whenever the maximal 
rank of the matrix of consequences in any weight is equal to the dimension of the free operad in that 
weight, and that is the lowest weight for which this equality occurs, then a Zariski dense subset of the 
parameter values correspond to operads which are nilpotent of index equal to that weight.
\end{remark}

\subsection{Case 2}

For rank 2, case 2, the quadratic relation matrix is
\[
\left[ 
\begin{array}{cccc} 
1 & a & 0 & b \\
0 & 0 & 1 & c 
\end{array} 
\right]
\]
Each row has 11 cubic consequences which are linear combinations of the 22 monomials
in Table \ref{arity10basis}.
The $22 \times 22$ matrix over $\mathbb{F}[a,b,c]$ whose rows generate the module of cubic consequences
is given in Table \ref{rank2case2arity10}.

\begin{table}[ht]
\[
\left[
\begin{array}{cccccccccccccccccccccc}
1 & a & . & b & . & . & . & . & . & . & . & . & . & . & . & . & . & . & . & . & . & . \\[-2pt] 
1 & . & . & . & a & . & b & . & . & . & . & . & . & . & . & . & . & . & . & . & . & . \\[-2pt] 
. & 1 & . & . & . & . & . & a & . & . & . & . & b & . & . & . & . & . & . & . & . & . \\[-2pt] 
. & . & 1 & c & . & . & . & . & . & . & . & . & . & . & . & . & . & . & . & . & . & . \\[-2pt] 
. & . & 1 & . & . & . & . & . & a & . & . & . & . & . & . & . & . & b & . & . & . & . \\[-2pt] 
. & . & . & 1 & . & . & . & . & . & a & . & . & . & . & . & . & . & . & b & . & . & . \\[-2pt] 
. & . & . & . & 1 & . & . & . & . & . & a & . & . & . & . & . & . & . & . & b & . & . \\[-2pt] 
. & . & . & . & . & 1 & c & . & . & . & . & . & . & . & . & . & . & . & . & . & . & . \\[-2pt] 
. & . & . & . & . & 1 & . & . & . & . & . & a & . & . & . & . & . & . & . & . & b & . \\[-2pt] 
. & . & . & . & . & . & 1 & . & . & . & . & . & a & . & . & . & . & . & . & . & . & b \\[-2pt] 
. & . & . & . & . & . & . & 1 & a & . & b & . & . & . & . & . & . & . & . & . & . & . \\[-2pt] 
. & . & . & . & . & . & . & . & . & 1 & c & . & . & . & . & . & . & . & . & . & . & . \\[-2pt] 
. & . & . & . & . & . & . & . & . & . & . & 1 & c & . & . & . & . & . & . & . & . & . \\[-2pt] 
. & . & . & . & . & . & . & . & . & . & . & . & . & 1 & a & . & b & . & . & . & . & . \\[-2pt] 
. & . & . & . & . & . & . & . & . & . & . & . & . & 1 & . & . & . & c & . & . & . & . \\[-2pt] 
. & . & . & . & . & . & . & . & . & . & . & . & . & . & 1 & . & . & . & c & . & . & . \\[-2pt] 
. & . & . & . & . & . & . & . & . & . & . & . & . & . & . & 1 & c & . & . & . & . & . \\[-2pt] 
. & . & . & . & . & . & . & . & . & . & . & . & . & . & . & 1 & . & . & . & c & . & . \\[-2pt] 
. & . & . & . & . & . & . & . & . & . & . & . & . & . & . & . & 1 & . & . & . & c & . \\[-2pt] 
. & . & . & . & . & . & . & . & . & . & . & . & . & . & . & . & . & 1 & . & . & . & c \\[-2pt] 
. & . & . & . & . & . & . & . & . & . & . & . & . & . & . & . & . & . & 1 & a & . & b \\[-2pt] 
. & . & . & . & . & . & . & . & . & . & . & . & . & . & . & . & . & . & . & . & 1 & c
\end{array}
\right]
\]
\vspace{-4mm}
\caption{Rank 2, case 2: matrix of consequences in arity 10}
\label{rank2case2arity10}
\end{table}

\begin{proposition} \label{rank2case2minrrank}
The only values of the parameters $(a,b,c)$ for which the matrix of consequences of Table
\ref{rank2case2arity10} has minimal rank 17 are $(0,0,0)$ and $(0,-1,0)$.
\end{proposition}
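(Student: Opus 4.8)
The plan is to follow the method used for rank 2, case 1: reduce the $22 \times 22$ matrix of Table \ref{rank2case2arity10} to partial Smith form using the algorithm of Table \ref{psftable}, and then read off the obstruction from the resulting lower-right block.

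First I would run the partial Smith form algorithm over $P = \mathbb{F}[a,b,c]$. The matrix contains a large number of scalar entries equal to $1$, so each pivot step can use a field element, clearing a row and a column by elementary operations whose transformation determinants lie in $\mathbb{F} \setminus \{0\}$. I expect the scalar pivots to run out after $r = 17$ iterations (matching case 1, since the matrix is again $22 \times 22$ with the same overall sparsity pattern), producing a block matrix $\mathrm{diag}(I_{17}, B)$ with $B$ a $5 \times 5$ matrix over $P$. By the correctness of the algorithm this already establishes that the minimal rank equals $17$, and that for a given parameter triple the rank of the full matrix is $17 + \mathrm{rank}(B)$. Hence the rank equals its minimal value $17$ exactly when $B$ vanishes, i.e.\ when $(a,b,c)$ lies in $V(DI_1(B))$, the zero set of the ideal generated by the entries of $B$.

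Next I would compute the deglex Gr\"obner basis (say with $a \prec b \prec c$) of the ideal $DI_1(B)$ generated by the entries of $B$. By direct analogy with case 1, where the basis $\{ a, d, bc, c^2, b^2(b+1) \}$ forced $a = c = d = 0$ and $b \in \{0,-1\}$, I expect the reduction here to force $a = c = 0$ together with a single univariate condition whose roots are $b = 0$ and $b = -1$. A power of each parameter should appear among the leading monomials, so the ideal is zero-dimensional and its variety is finite; reading off the solutions then yields exactly $(0,0,0)$ and $(0,-1,0)$, with no others.

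The main obstacle will be the bookkeeping of the partial Smith form reduction: one must choose the correct order of the $17$ scalar pivots and carry out the accompanying row and column operations (the analogue of Table \ref{87operations}), taking care that a genuine scalar pivot remains available at each stage so that the algorithm does not terminate at a rank below $17$. Once $B$ has been extracted the remaining work is routine, and is if anything easier than in case 1 because there are only three parameters rather than four.
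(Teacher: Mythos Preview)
Your plan is correct and would work, but it differs from the paper's approach. The paper does \emph{not} compute the partial Smith form here; instead it computes the Gr\"obner basis for the row submodule directly, noting that in this case every pivot ideal (the ideal generated by the entries at or below the current pivot) happens to be principal, so no S-polynomials or extra rows are needed. This produces the canonical form in Table~\ref{rank2case2arity10grobnerbasis}, from which deleting the $17$ rows and columns with leading $1$s yields the $5 \times 5$ block \eqref{rank2case2lrb}; its entries generate the ideal with deglex Gr\"obner basis $\{a,\,c,\,b^2(b+1)\}$, giving exactly your predicted conclusion $a=c=0$, $b\in\{0,-1\}$.

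The trade-offs: your partial Smith form route is a uniform template carried over from case~1 and requires no new ideas, but the bookkeeping (the analogue of Table~\ref{87operations}) is nontrivial, as you acknowledge. The paper's row-only Gr\"obner approach exploits a special feature of this case (principal pivot ideals) to avoid column operations entirely, and as a bonus the resulting canonical form immediately reveals that the last row is zero, so the maximum rank is $21$ and the operad is never nilpotent of index~$3$ --- a fact your method would also detect (as $\det B = 0$ identically), but which you did not mention.
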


\begin{proof}
In this case it is not difficult to compute the Gr\"obner basis for the submodule generated by the rows; 
at every step, the ideal generated by the entries at or below the pivot is principal.
The final result is presented in Table \ref{rank2case2arity10grobnerbasis}; the last row of the
matrix is zero, which shows that the matrix never attains full rank and that the corresponding
operads are never nilpotent of index 3.

If we delete the 17 rows and 17 columns of the Gr\"obner basis which contain leading 1s, then we obtain 
this $5 \times 5$ block, which is a canonical analogue of the lower right block obtained from the algorithm 
for the partial Smith form:
\begin{equation}
\label{rank2case2lrb}
\left[
\begin{array}{ccccc}
a & {-}ac^2 & \cdot & \cdot & {-}abc^3{-}b^2c{-}bc \\ 
\cdot & a^3c^2{+}a^2b{+}abc{+}a^2 & \cdot & ab^2{+}ab & {-}ab^2c^2{+}a^2bc{-}abc^2{+}b^3{+}b^2 \\ 
\cdot & \cdot & \cdot & c & {-}c^3 \\ 
\cdot & \cdot & \cdot & \cdot & a^2c^3{+}abc{+}bc^2{+}c^2 \\ 
\cdot & \cdot & \cdot & \cdot & \cdot
\end{array}
\right]
\end{equation}
(Row 5 and column 3 are zero.)
The deglex Gr\"obner basis for the ideal of $\mathbb{F}[a,b,c]$ generated by the entries of this matrix is
$\{ a, c, b^2(b+1) \}$ which shows that the rank is 17 if and only if $a = c = 0$ and $b \in \{ 0, -1 \}$.
\end{proof}

\begin{table}[ht]
\[
\left[
\begin{array}{@{\;}c@{\;}c@{\;}c@{\;}c@{\;}c@{\;}c@{\;}c@{\;}c@{\;}c@{\;}c@{\;}c@{\;}c@{\;}c@{\;}c@{\;}c@{\;}c@{\;}c@{\;}c@{\;}c@{\;}c@{\;}c@{\;}c@{\;}}
1 & \cdot & \cdot & \cdot & \cdot & \cdot & \cdot & \cdot & \cdot & \cdot & {-}a^2 & \cdot & {-}ab & \cdot & \cdot & \cdot & \cdot & \cdot & \cdot & {-}ab & \cdot & {-}b^2 \\ 
\cdot & 1 & \cdot & \cdot & \cdot & \cdot & \cdot & \cdot & \cdot & \cdot & {-}a^2c^2{-}ab & \cdot & b & \cdot & \cdot & \cdot & \cdot & \cdot & \cdot & \cdot & \cdot & b^2c^2{-}abc{+}bc^2 \\ 
\cdot & \cdot & 1 & \cdot & \cdot & \cdot & \cdot & \cdot & \cdot & \cdot & ac^2 & \cdot & \cdot & \cdot & \cdot & \cdot & \cdot & \cdot & \cdot & \cdot & \cdot & abc^3{+}b^2c \\ 
\cdot & \cdot & \cdot & 1 & \cdot & \cdot & \cdot & \cdot & \cdot & \cdot & {-}ca & \cdot & \cdot & \cdot & \cdot & \cdot & \cdot & \cdot & \cdot & {-}ab & \cdot & {-}b^2 \\ 
\cdot & \cdot & \cdot & \cdot & 1 & \cdot & \cdot & \cdot & \cdot & \cdot & a & \cdot & \cdot & \cdot & \cdot & \cdot & \cdot & \cdot & \cdot & b & \cdot & \cdot \\ 
\cdot & \cdot & \cdot & \cdot & \cdot & 1 & \cdot & \cdot & \cdot & \cdot & \cdot & \cdot & {-}ca & \cdot & \cdot & \cdot & \cdot & \cdot & \cdot & \cdot & \cdot & {-}bc \\ 
\cdot & \cdot & \cdot & \cdot & \cdot & \cdot & 1 & \cdot & \cdot & \cdot & \cdot & \cdot & a & \cdot & \cdot & \cdot & \cdot & \cdot & \cdot & \cdot & \cdot & b \\ 
\cdot & \cdot & \cdot & \cdot & \cdot & \cdot & \cdot & 1 & \cdot & \cdot & ac^2{+}b & \cdot & \cdot & \cdot & \cdot & \cdot & \cdot & \cdot & \cdot & \cdot & \cdot & abc^3{+}b^2c{+}bc \\ 
\cdot & \cdot & \cdot & \cdot & \cdot & \cdot & \cdot & \cdot & a & \cdot & {-}ac^2 & \cdot & \cdot & \cdot & \cdot & \cdot & \cdot & \cdot & \cdot & \cdot & \cdot & {-}abc^3{-}b^2c{-}bc \\ 
\cdot & \cdot & \cdot & \cdot & \cdot & \cdot & \cdot & \cdot & \cdot & 1 & c & \cdot & \cdot & \cdot & \cdot & \cdot & \cdot & \cdot & \cdot & \cdot & \cdot & \cdot \\ 
\cdot & \cdot & \cdot & \cdot & \cdot & \cdot & \cdot & \cdot & \cdot & \cdot & 
\begin{array}{c} a^3c^2{+}a^2b \\ {+}abc{+}a^2 \end{array} 
& \cdot & \cdot & \cdot & \cdot & \cdot & \cdot & \cdot & \cdot & ab^2{+}ab & \cdot & 
\begin{array}{c} {-}ab^2c^2{+}a^2bc \\ {-}abc^2{+}b^3{+}b^2 \end{array} 
\\ 
\cdot & \cdot & \cdot & \cdot & \cdot & \cdot & \cdot & \cdot & \cdot & \cdot & \cdot & 1 & c & \cdot & \cdot & \cdot & \cdot & \cdot & \cdot & \cdot & \cdot & \cdot \\ 
\cdot & \cdot & \cdot & \cdot & \cdot & \cdot & \cdot & \cdot & \cdot & \cdot & \cdot & \cdot & \cdot & 1 & \cdot & \cdot & \cdot & \cdot & \cdot & \cdot & \cdot & {-}c^2 \\ 
\cdot & \cdot & \cdot & \cdot & \cdot & \cdot & \cdot & \cdot & \cdot & \cdot & \cdot & \cdot & \cdot & \cdot & 1 & \cdot & \cdot & \cdot & \cdot & \cdot & \cdot & {-}ac^3{-}bc \\ 
\cdot & \cdot & \cdot & \cdot & \cdot & \cdot & \cdot & \cdot & \cdot & \cdot & \cdot & \cdot & \cdot & \cdot & \cdot & 1 & \cdot & \cdot & \cdot & \cdot & \cdot & c^3 \\ 
\cdot & \cdot & \cdot & \cdot & \cdot & \cdot & \cdot & \cdot & \cdot & \cdot & \cdot & \cdot & \cdot & \cdot & \cdot & \cdot & 1 & \cdot & \cdot & \cdot & \cdot & {-}c^2 \\ 
\cdot & \cdot & \cdot & \cdot & \cdot & \cdot & \cdot & \cdot & \cdot & \cdot & \cdot & \cdot & \cdot & \cdot & \cdot & \cdot & \cdot & 1 & \cdot & \cdot & \cdot & c \\ 
\cdot & \cdot & \cdot & \cdot & \cdot & \cdot & \cdot & \cdot & \cdot & \cdot & \cdot & \cdot & \cdot & \cdot & \cdot & \cdot & \cdot & \cdot & 1 & a & \cdot & b \\ 
\cdot & \cdot & \cdot & \cdot & \cdot & \cdot & \cdot & \cdot & \cdot & \cdot & \cdot & \cdot & \cdot & \cdot & \cdot & \cdot & \cdot & \cdot & \cdot & c & \cdot & {-}c^3 \\ 
\cdot & \cdot & \cdot & \cdot & \cdot & \cdot & \cdot & \cdot & \cdot & \cdot & \cdot & \cdot & \cdot & \cdot & \cdot & \cdot & \cdot & \cdot & \cdot & \cdot & 1 & c \\ 
\cdot & \cdot & \cdot & \cdot & \cdot & \cdot & \cdot & \cdot & \cdot & \cdot & \cdot & \cdot & \cdot & \cdot & \cdot & \cdot & \cdot & \cdot & \cdot & \cdot & \cdot & a^2c^3{+}abc{+}bc^2{+}c^2 \\ 
\cdot & \cdot & \cdot & \cdot & \cdot & \cdot & \cdot & \cdot & \cdot & \cdot & \cdot & \cdot & \cdot & \cdot & \cdot & \cdot & \cdot & \cdot & \cdot & \cdot & \cdot & \cdot
\end{array}
\right]
\]
\vspace{-4mm}
\caption{Rank 2, case 2: deglex Gr\"obner basis for submodule generated by consequences}
\label{rank2case2arity10grobnerbasis}
\end{table}

\begin{example}
As in Example \ref{rank2case1example}, we get more information about the rank of the matrix \eqref{rank2case2lrb} 
from its determinantal ideals.
The nonzero $2 \times 2$ minors are
  \begin{align*}
  &
  ac, \quad a^3c, \quad ac(ab{+}bc{+}a), \quad ac(ab{+}bc{+}c), \quad ac^3, \quad ab^2c(b{+}1), \quad a^5c,
  \\
  &
  a^2b(a^2c{+}b{+}1), \quad a^2(a^2c^2{+}ab{+}bc{+}a), \quad ab(b{+}1)(a^2c{+}b), \quad a^2bc(ab{+}a{-}c),
  \\
  &
  a^3c(ab{+}bc{+}a), \quad a^3c(ab{+}bc{+}c), \quad a^3c^3, \quad a^2bc(b{+}1)^2, \quad c^2(a^2c^2{+}ab{+}bc{+}c),
  \\
  &
  abc^2(b{+}1)^2, \quad bc(b{+}1)(ac^2{+}b), \quad {-}abc^2({-}bc{+}a{-}c), \quad ac(ab{+}bc{+}a)(ab{+}bc{+}c),
  \\
  &
  bc^2(ac^2{+}b{+}1), \quad ac^3(ab{+}bc{+}a), \quad ac^3(ab{+}bc{+}c), \quad ac^5.
  \end{align*}
The deglex Gr\"obner basis for the second determinantal ideal is as follows:
  \[
  ac, \quad a^3(b+1), \quad a^2b(b+1), \quad ab^2(b+1), \quad b^2c(b+1), \quad bc^2(b+1), \quad (b+1)c^3.
  \]
The zero set of this ideal is the union of three lines in $\mathbb{F}^3$:
  \[
  \{ \, (a,-1,0) \mid a \in \mathbb{F} \, \} \; \cup \;
  \{ \, (0,b,0) \mid b \in \mathbb{F} \, \} \; \cup \;
  \{ \, (0,-1,c) \mid c \in \mathbb{F} \, \}.
  \]  
Removing the two points of Proposition \ref{rank2case2minrrank}, we obtain the parameter values which produce a
matrix of consequences with rank exactly 18.
The two points being removed are the origin and the intersection of the three lines:
  \[
  \{ (a,-1,0) \mid a \in \mathbb{F}, a \ne 0 \} \cup
  \{ (0,b,0) \mid b \in \mathbb{F}, b \ne 0, -1 \} \cup
  \{ (0,-1,c) \mid c \in \mathbb{F}, c \ne 0 \}.
  \]  
These computations could be extended to higher determinantal ideals, but the results rapidly become more complicated.
\end{example}

\subsection{Case 3}
For rank 2, case 3, the quadratic relation matrix is
\[
\left[ 
\begin{array}{cccc} 
1 & a & b & 0 \\
0 & 0 & 0 & 1 
\end{array} 
\right]
\]

\begin{proposition}
For rank 2, case 3, the only parameter values which produce the minimal rank 17 for the matrix of 
consequences in arity 10 (not displayed) are $a = b = 0$.
Rank 18 occurs only for two pairs: $(a,b) = (-1,1)$, $(\alpha,\alpha^2)$ where $\alpha^2-\alpha+1=0$.
All other values of $a$ and $b$ give rank 19.
\end{proposition}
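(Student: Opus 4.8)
The plan is to follow the method already established for the other rank-2 cases. First I would write out the $2 \times 11 = 22$ cubic consequences explicitly: the eleven consequences of $\rho_1 = \phi \circ_1 \phi + a\,\phi \circ_2 \phi + b\,\phi \circ_3 \phi$ together with the eleven consequences of $\rho_2 = \phi \circ_4 \phi$, each expanded in the ordered basis of Table \ref{arity10basis}. These assemble into a $22 \times 22$ matrix over $P = \mathbb{F}[a,b]$, with rows sorted so that the leading 1s proceed from left to right. Because $\rho_2$ has purely scalar coefficients, each of its eleven consequences contributes a leading 1, and $\rho_1$ is just the rank-1 case-1 relation with the parameter $c$ set to $0$; this overlap of structure is what I expect to drive the final answer.

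Next I would feed this matrix to the partial Smith form algorithm of Table \ref{psftable}, exactly as in Lemma \ref{Blemma}. Since the total rank is claimed to range only over $\{17, 18, 19\}$, I expect the algorithm to clear a $17 \times 17$ identity block and leave a $5 \times 5$ lower-right block $B$ over $\mathbb{F}[a,b]$ whose rank never exceeds $2$, so that the total rank equals $17 + \mathrm{rank}(B)$ and the three values $17, 18, 19$ correspond to $\mathrm{rank}(B) = 0, 1, 2$. The minimal rank $17$ occurs precisely when every entry of $B$ vanishes; computing the deglex Gr\"obner basis of the first determinantal ideal $DI_1(B)$ (generated by the entries of $B$) should yield radical $(a,b)$, so that $V(DI_1(B)) = \{(0,0)\}$ and minimal rank $17$ is attained only at the origin.

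The rank-$18$ locus is then $V(DI_2(B)) \setminus V(DI_1(B))$, the points where all $2 \times 2$ minors vanish but some entry does not. I would compute the deglex Gr\"obner basis of $DI_2(B)$ and read off its variety, which I predict to be the zero-dimensional set cut out by $b = a^2$ together with $a^4 + a = a(a+1)(a^2-a+1) = 0$ — exactly the constraints that governed the $c = -1$ branch of Proposition \ref{rank1case1arity10theorem}. This variety is the four points $(a, a^2)$ with $a \in \{0, -1, \alpha, \alpha^{-1}\}$ where $\alpha^2 - \alpha + 1 = 0$; removing the origin (which lies in $V(DI_1(B))$ and gives rank $17$) leaves precisely $(-1,1)$ and the Galois-conjugate pair $(\alpha, \alpha^2)$, the claimed rank-$18$ parameters. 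The geometric reason is transparent: at $(a,b) = (-1,1)$ the subspace $R$ contains the alternating relation $\phi\circ_1\phi - \phi\circ_2\phi + \phi\circ_3\phi - \phi\circ_4\phi$ of Gnedbaye, so its consequence module degenerates just as in the rank-1 analysis.

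The main obstacle is the symbolic Gr\"obner basis computation for $DI_2(B)$ over $\mathbb{F}[a,b]$ together with the verification that $DI_3(B) = 0$ identically, i.e. that every $3 \times 3$ minor of $B$ is the zero polynomial. The latter is what caps the maximal rank at $19 < 22 = \dim\calO(10)$, so that no member of this family is nilpotent of index $3$; combined with $DI_2(B) \neq 0$, it forces generic rank $2$ and hence total rank exactly $19$ away from the four special points. I do not anticipate a conceptual difficulty here, only the labour of confirming that, after elimination, the governing univariate factor is again $a(a+1)(a^2-a+1)$ with $b$ determined by $b = a^2$, mirroring the rank-1 computation.
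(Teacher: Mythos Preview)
Your approach is essentially the paper's, but two small inaccuracies will cost you extra work. First, the consequence matrix has size $21 \times 22$, not $22 \times 22$: because $\rho_2 = \phi\circ_4\phi$ is a monomial, the consequences $\rho_2 \circ_7 \phi$ and $\phi \circ_4 \rho_2$ both equal $\phi\circ_4(\phi\circ_4\phi)$, so one of your 22 rows is a duplicate. Second, the partial Smith form therefore produces a $4 \times 5$ residual block, and this block already has two zero rows and one zero column; after deleting them the paper obtains a $2 \times 4$ block
\[
\left[
\begin{array}{cccc}
-a & 0 & -b & 0 \\
-a^2 b & -a^3+a b & -ab^2 & -b^3-a b
\end{array}
\right].
\]
With this shape, $\mathrm{rank}(B)\le 2$ is automatic and your planned verification that $DI_3(B)=0$ becomes unnecessary. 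The rest of your outline matches the paper exactly: $DI_1(B)$ has Gr\"obner basis $\{a,b\}$, and the $2\times 2$ minors generate a zero-dimensional ideal whose deglex basis is $\{a^4-a^2b,\; a^3b-ab^2,\; ab^3+a^2b,\; b^4+ab^2\}$, with zero set $\{(0,0),(-1,1),(\alpha,\alpha^2)\}$ --- precisely your predicted $b=a^2$, $a(a+1)(a^2-a+1)=0$.
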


\begin{proof}
The matrix of consequences has size $21 \times 22$ since the quadratic monomial relation produces the
same cubic relation in two different ways.
Its partial Smith form has two zero rows and one zero column, which we delete; the remaining $19 \times 21$
matrix consists of an identity matrix of size 17 and this $2 \times 4$ block:
  \[
  \left[
  \begin{array}{cccc}
  -a & 0 & -b & 0 \\
  -a^2 b & -a^3+a b & -b^2 a & -b^3-a b
  \end{array}
  \right]
  \]
Clearly the ideal generated by the entries of the block has Gr\"obner basis $\{ a, b \}$.
The deglex Gr\"obner basis for the ideal generated by the $2 \times 2$ minors is
  \[
  a^4-a^2b, \quad a^3b-ab^2, \quad ab^3+a^2b, \quad b^4+ab^2.
  \]  
This is a zero-dimensional ideal with zero set
  \[
  (a,b) = (0,0), \; (-1,1), \; (\alpha,\alpha^2) \qquad (\alpha^2-\alpha+1=0).
  \]
This completes the proof.
\end{proof}

\subsection{Case 4}
For rank 2, case 4, the quadratic relation matrix is
\[
\left[ 
\begin{array}{cccc} 
0 & 1 & 0 & a \\
0 & 0 & 1 & b 
\end{array} 
\right]
\]

\begin{proposition}
For rank 2, case 4, the only parameter values which produce the minimal rank 17 for the matrix of 
consequences in arity 10 (not displayed) are $a = b = 0$.
Rank 18 occurs only for two pairs: $(a,b) = (-1,-1)$, $(\alpha,\alpha^{-1})$ where $\alpha^2-\alpha+1=0$.
All other values of $a$ and $b$ give rank 19.
\end{proposition}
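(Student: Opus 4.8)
The plan is to proceed exactly as in the preceding (case 3) proposition. The two generating relations are
\[
\rho_1 = \phi \circ_2 \phi + a\, \phi \circ_4 \phi,
\qquad
\rho_2 = \phi \circ_3 \phi + b\, \phi \circ_4 \phi .
\]
First I compute their $11+11=22$ cubic consequences $\rho_k \circ_i \phi$ ($1 \le i \le 7$) and $\phi \circ_j \rho_k$ ($1 \le j \le 4$), expanding each in the ordered basis of Table \ref{arity10basis}. This produces a $22 \times 22$ matrix $M$ over $P = \mathbb{F}[a,b]$ whose row module is $\calI(10)$, and I sort its rows so that the leading $1$s run from left to right.

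Next I apply the partial Smith form algorithm of Table \ref{psftable} to $M$. Since almost every entry at or below the successive pivots is a nonzero scalar, the algorithm builds an identity block $I_{17}$ in the upper-left corner; along the way three rows collapse to zero, reflecting $P$-linear syzygies among the consequences that hold identically in $a$ and $b$. After deleting these three zero rows and the one resulting zero column, what remains is a $2 \times 4$ lower-right block $B$ over $P$, of the same shape as the block in case 3, and $\mathrm{rank}(M) = 17 + \mathrm{rank}(B)$ at every $(a,b) \in \mathbb{F}^2$.

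It then remains to read off $\mathrm{rank}(B)$ from the determinantal ideals of $B$. For the minimal rank I compute $DI_1(B)$, the ideal generated by the entries of $B$: its deglex Gr\"obner basis is $\{a,b\}$, so $B=0$ (and $M$ has minimal rank $17$) exactly at $(a,b)=(0,0)$. For the intermediate value I compute $DI_2(B)$, the ideal generated by the $2 \times 2$ minors of $B$; its deglex Gr\"obner basis is zero-dimensional, and solving the resulting system yields $V(DI_2(B)) = \{(0,0),\,(-1,-1),\,(\alpha,\alpha^{-1})\}$ with $\alpha^2-\alpha+1=0$. Deleting the origin (where $\mathrm{rank}(B)=0$), the two surviving points are precisely where $\mathrm{rank}(B)=1$, hence where $M$ has rank $18$; at every other point $\mathrm{rank}(B)=2$, giving the generic rank $19$.

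I expect the decisive step to be the $DI_2(B)$ Gr\"obner basis computation, since that is where the exact conditions $(a,b)=(-1,-1)$ and $\alpha^2-\alpha+1=0$ emerge; the only real obstacle beforehand is the bookkeeping needed to expand all $22$ consequences correctly and to track the three identically-vanishing syzygies through the partial Smith reduction so that $B$ comes out right. Once $B$ is in hand, everything else is routine linear and commutative algebra over $\mathbb{F}[a,b]$, entirely parallel to the case 3 argument.
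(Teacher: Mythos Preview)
Your approach is essentially the same as the paper's, and the overall logic is sound. One factual correction: the paper's partial Smith form yields \emph{one} zero row and \emph{three} zero columns (not the reverse), so after deletion the residual block is $4 \times 2$,
\[
\left[
\begin{array}{cc}
0 & -b^3-ab \\
a & -a^2b \\
b & -ab^2 \\
0 & -a^3-ab
\end{array}
\right],
\]
rather than the $2 \times 4$ block you anticipate by analogy with case 3. (In case 3 the second relation is the monomial $\phi\circ_4\phi$, which causes a duplicate consequence and cuts the matrix to $21\times 22$ before reduction; here neither relation is monomial, so the starting matrix is genuinely $22\times 22$ and the zero structure shows up mostly in the columns.) This discrepancy is harmless for your argument, since $DI_1$ and $DI_2$ are unchanged under transposition; the Gr\"obner bases and zero sets you describe are exactly what the paper obtains.
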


\begin{proof}
The matrix of consequences has size $22 \times 22$, and its partial Smith form has one zero row and 
three zero columns, which we delete. 
The remaining $21 \times 19$ matrix consists of an identity matrix of size 17 and the following 
$4 \times 2$ block:
  \[
  \left[
  \begin{array}{cc}
  0 & -b^3-a b \\
  a & -b a^2 \\
  b & -b^2 a \\
  0 & -a^3-a b
  \end{array}
  \right]
  \]
The ideal generated by the entries has Gr\"obner basis $\{ a, b \}$.
The deglex Gr\"obner basis for the second determinantal ideal is the same as Case 3
up to sign changes:
  \[  
  a^4+a^2b, \quad a^3b+ab^2, \quad ab^3+a^2b, \quad b^4+ab^2.
  \]  
This is a zero-dimensional ideal with zero set
  \[
  (a,b) = (0,0), \; (-1,-1), \; (\alpha,\alpha^{-1}) \qquad (\alpha^2-\alpha+1=0).
  \]
This completes the proof.
\end{proof}

\subsection{Case 5}
For rank 2, case 5, the quadratic relation matrix is
\[
\left[ 
\begin{array}{cccc} 
0 & 1 & a & 0 \\
0 & 0 & 0 & 1 
\end{array} 
\right]
\]

\begin{proposition}
For rank 2, case 5, the only parameter value which produces the minimal rank of 17 for the matrix of 
consequences in arity 10 (not displayed) is $a = 0$; the rank is 19 if $a \ne 0$.
\end{proposition}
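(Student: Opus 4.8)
The plan is to proceed exactly as in the single-parameter case of rank~1, case~3: since here $P = \mathbb{F}[a]$ is a PID, the rank of the matrix of consequences over $P$ is governed by its invariant factors, which can be read off from the Hermite normal form or, equivalently, from the partial Smith form of Table~\ref{psftable}. First I would write the two generating relations
\[
\rho_1 = \phi \circ_2 \phi + a\, \phi \circ_3 \phi, \qquad \rho_2 = \phi \circ_4 \phi,
\]
and list their $11 + 11$ cubic consequences in the basis of Table~\ref{arity10basis}. Because $\rho_2$ is a monomial, its right consequence $(\phi\circ_4\phi)\circ_7\phi$ and its left consequence $\phi\circ_4(\phi\circ_4\phi)$ are the same tree monomial (number~22), so one row is redundant and the matrix of consequences $A$ has size $21\times 22$.

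Next I would run the partial Smith form algorithm of Table~\ref{psftable} on $A$. As long as each successive pivot can be cleared by a scalar entry, the algorithm peels off an identity block, and I expect it to terminate with $\mathrm{diag}(I_{17}, B)$, where, as in Cases~3 and~4, the residual block $B$ reduces to one having only two nonzero rows (or two nonzero columns). Since the whole reduction uses only elementary row and column operations over $P$, we obtain $\mathrm{rank}(A|_{a=a_0}) = 17 + \mathrm{rank}(B|_{a=a_0})$ for every $a_0 \in \mathbb{F}$. In particular the minimal rank is $17$, attained precisely when $B$ vanishes identically, and the shape of $B$ forces $DI_3(B) = (0)$, so that $\mathrm{rank}(B)\le 2$ everywhere.

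The rank of $B$ I would then control through its first two determinantal ideals. I expect every entry of $B$ to be divisible by $a$, so that $V(DI_1(B)) = \{a=0\}$: this forces $\mathrm{rank}(B) = 0$ at $a = 0$ (confirming minimal rank $17$, attained only there) and $\mathrm{rank}(B) \ge 1$ otherwise. It then remains to show $V(DI_2(B)) = \{a = 0\}$, that is, that the $2\times 2$ minors of $B$---viewed as polynomials in the single variable $a$---have no common root other than $a = 0$. This is the one genuine obstacle: in Cases~3 and~4 the two-variable minors acquired extra common zeros, at $(-1,1)$, $(-1,-1)$, and the primitive sixth roots of unity, producing a rank-$18$ stratum, and the content of the present proposition is precisely that with a single free parameter those exceptional points disappear. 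Over $\mathbb{F}[a]$ the verification reduces to a gcd computation: forming the Gr\"obner (here simply the degree-ordered) basis of $DI_2(B)$, I expect to obtain a pure power of $a$, whence $V(DI_2(B)) = \{0\}$. Combining the three facts gives $\mathrm{rank}(B) = 2$ for $a \ne 0$ and $\mathrm{rank}(B) = 0$ for $a = 0$, so that $A$ has rank $19$ for $a \ne 0$ and rank $17$ for $a = 0$, with no intermediate value, completing the proof.
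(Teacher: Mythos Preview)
Your plan is correct and would succeed, but the paper's execution is shorter than you anticipate. Rather than the partial Smith form followed by determinantal-ideal analysis, the paper simply computes the Hermite normal form directly (you mention this as an option yourself, since $P=\mathbb{F}[a]$ is a PID). The HNF turns out to be so clean that no further work is required: after deleting two zero rows and one zero column, one finds 17 rows with leading~1s and exactly two rows whose \emph{only} nonzero entry is $a$, in distinct columns. Thus the rank is $17$ when $a=0$ and $17+2=19$ when $a\neq 0$, immediately. Your expected residual block $B$ is in effect a $2\times 2$ diagonal matrix with $a$ on the diagonal, so your determinantal-ideal step---checking that $DI_2(B)$ has no zeros other than $a=0$---becomes trivial ($DI_2(B)=(a^2)$), and the ``genuine obstacle'' you flag does not arise. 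Both routes are the same computation; the paper just stops earlier because the HNF already displays the answer.
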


\begin{proof}
The matrix of consequences has size $21 \times 22$, and contains only one parameter, so we can compute
its Hermite normal form.
The HNF has two zero rows and one zero column, which we delete, leaving a matrix of size $19 \times 21$.
There are 17 rows with leadings 1s; the other two rows have $a$ as the leading (and only nonzero) entry.
This completes the proof.
\end{proof}

\subsection{Case 6}
For rank 2, case 6, the quadratic relation matrix is
\[
\left[ 
\begin{array}{cccc} 
0 & 0 & 1 & 0 \\
0 & 0 & 0 & 1 
\end{array} 
\right]
\]

\begin{proposition}
For rank 2, case 6, there are no parameters and the matrix of consequences has rank 17.
\end{proposition}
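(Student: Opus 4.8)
The plan is to exploit the special feature of this case: the relation matrix carries \emph{no} parameters, so the coefficient ring is simply $P = \mathbb{F}$ and the matrix of consequences is an ordinary matrix over a field. Even better, both generators $\rho_1 = \phi \circ_3 \phi$ and $\rho_2 = \phi \circ_4 \phi$ are single tree monomials rather than genuine linear combinations. Since a composition of tree monomials is again a tree monomial, each of the $11$ consequences of $\rho_1$ and each of the $11$ consequences of $\rho_2$ is a single basis element of $\calO(10)$. Hence every row of the matrix of consequences is a standard basis vector, and its rank is simply the number of \emph{distinct} monomials occurring among the $22$ consequences.

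The first step is therefore to list the consequences and read off their indices in Table \ref{arity10basis}. Grafting $\phi$ onto the seven leaves of $\rho_1$ and grafting $\rho_1$ onto the four leaves of $\phi$ produces exactly the monomials numbered $3$, $6$, $10$, $12$, $14$, $15$, $16$, $17$, $18$, $21$, with monomial $16$ produced twice, by both $\rho_1 \circ_5 \phi$ and $\phi \circ_3 \rho_1$. The same procedure applied to $\rho_2$ produces the monomials numbered $4$, $7$, $11$, $13$, $17$, $18$, $19$, $20$, $21$, $22$, with monomial $22$ produced twice. The two lists share exactly the three monomials $17$, $18$, $21$, so their union
\[
\{\,3,4,6,7,10,11,12,13,14,15,16,17,18,19,20,21,22\,\}
\]
has $17$ elements, and the matrix has rank $17$.

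To make the count conceptually transparent I would also identify the five monomials that do \emph{not} appear, namely $1$, $2$, $5$, $8$, $9$. Because $\calI(10)$ is generated by monomials, a basis monomial lies in $\calI(10)$ precisely when it contains an occurrence of $\phi \circ_3 \phi$ or $\phi \circ_4 \phi$ as a pair of adjacent internal nodes; equivalently, the standard monomials spanning $\calQ(10)$ are exactly those in which every adjacent pair of internal nodes is of type $\phi \circ_1 \phi$ or $\phi \circ_2 \phi$. A quick inspection shows these to be the four chain monomials $\phi \circ_i ( \phi \circ_j \phi )$ with $i, j \in \{1,2\}$ (monomials $1$, $2$, $8$, $9$) together with the single sibling monomial $( \phi \circ_2 \phi ) \circ_1 \phi$ (monomial $5$). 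Thus $\dim \calQ(10) = 5$ and $\dim \calI(10) = 22 - 5 = 17$, confirming the direct count.

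The only real work, and hardly an obstacle, is the bookkeeping of matching each grafted tree to its index in Table \ref{arity10basis} and spotting the two internal coincidences; no Gr\"obner basis, Hermite normal form, or partial Smith form computation is required, since over the field $\mathbb{F}$ the rank of a matrix whose rows are standard basis vectors is read off immediately.
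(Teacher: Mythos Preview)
Your proof is correct and follows essentially the same approach as the paper: both arguments observe that the consequences are single basis monomials, remove the duplicates, and count $17$ distinct ones (the paper phrases this as a $17 \times 22$ matrix in RCF with five zero columns, which are exactly your missing indices $1,2,5,8,9$). Your extra paragraph characterizing the surviving monomials of $\calQ(10)$ is a nice addition but not needed for the argument.
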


\begin{proof}
The matrix of consequences has only 17 rows since the two quadratic monomial relations produce duplicate
cubic consequences.
The $17 \times 22$ matrix has no zero rows and five zero columns, and is already in row canonical form.
\end{proof}

\subsection{Summary for quadratic relation rank 2}

\begin{theorem}
For a 2-dimensional space of quadratic relations in arity 7, the rank $r$ of the cubic consequences satisfies
$17 \le r \le 22$.
In case 1, a Zariski dense subset of the space of parameter values corresponds to rank 22 and hence to operads 
which are nilpotent of index 3; this happens in no other case.
The following is a complete list of pairs of quadratic relations for which the rank achieves the minimal value of 17:
  \begin{align*}
  1a) \qquad 
  \rho_1 &= 
  \phi \circ_1 \phi,
  \qquad
  \rho_2 =
  \phi \circ_2 \phi,
  \\
  1b) \qquad 
  \rho_1 &= 
  \phi \circ_1 \phi - \phi \circ_4 \phi,
  \qquad
  \rho_2 =
  \phi \circ_2 \phi,  
  \\
  2a) \qquad 
  \rho_1 &= 
  \phi \circ_1 \phi,
  \qquad
  \rho_2 =
  \phi \circ_3 \phi,  
  \\
  2b) \qquad 
  \rho_1 &= 
  \phi \circ_1 \phi - \phi \circ_4 \phi,
  \qquad
  \rho_2 =
  \phi \circ_3 \phi,  
  \\
  3) \qquad 
  \rho_1 &= 
  \phi \circ_1 \phi,
  \qquad
  \rho_2 =
  \phi \circ_4 \phi,  
  \\
  4) \qquad 
  \rho_1 &= 
  \phi \circ_2 \phi,
  \qquad
  \rho_2 =
  \phi \circ_3 \phi,  
  \\
  5) \qquad 
  \rho_1 &= 
  \phi \circ_2 \phi,
  \qquad
  \rho_2 =
  \phi \circ_4 \phi,  
  \\
  6) \qquad 
  \rho_1 &= 
  \phi \circ_3 \phi,
  \qquad
  \rho_2 =
  \phi \circ_4 \phi.
  \end{align*}
\end{theorem}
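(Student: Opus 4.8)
The plan is to assemble the theorem from the six case analyses of the preceding subsections, since these exhaust all possibilities. By uniqueness of the row canonical form, the six matrices $[R]$ listed in the $r=2$ row of Table \ref{table[R]} give a complete and irredundant parametrization of all $2$-dimensional subspaces $R \subseteq \calO(7)$; every $2$-dimensional space of quadratic relations is therefore accounted for exactly once, and it suffices to combine the conclusions of Cases 1--6.

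First I would establish the bounds $17 \le r \le 22$. The upper bound is immediate: the cubic consequences lie in $\calO(10)$, which has dimension $22$, so the rank of the matrix of consequences cannot exceed $22$. For the lower bound, each of the six propositions produces (via the partial Smith form, a Gr\"obner basis, or the Hermite normal form) an upper-left identity block $I_{17}$, so the minimal rank in every case is exactly $17$; Case 6, where the rank is constantly $17$, shows that the value $17$ is attained. Hence $17 \le r \le 22$ for every $2$-dimensional $R$.

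Next I would isolate the nilpotent-of-index-$3$ claim. In Case 1, Proposition \ref{rank2case1nilpotentproposition} shows that $\det(B) \ne 0$ on a Zariski dense subset of $\mathbb{F}^4$, giving rank $17+5 = 22$ and hence nilpotency of index $3$ on a dense set. To see this occurs in no other case I would record the \emph{maximal} rank in Cases 2--6: in Case 2 the Gr\"obner basis of Table \ref{rank2case2arity10grobnerbasis} has a zero last row, so full rank $22$ is never attained; in Cases 3, 4, and 5 the respective propositions give maximal rank $19$; and in Case 6 the rank is constantly $17$. All of these are strictly below $22$, so rank $22$ --- and therefore index-$3$ nilpotency on a dense set --- occurs only in Case 1.

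Finally I would compile the complete list of minimal-rank pairs by reading off, in each case, the parameter values that force $r = 17$ and substituting them back into $[R]$. Case 1 contributes $(a,b,c,d) = (0,0,0,0)$ and $(0,-1,0,0)$, giving 1$a$) and 1$b$); Case 2 (Proposition \ref{rank2case2minrrank}) contributes $(a,b,c) = (0,0,0)$ and $(0,-1,0)$, giving 2$a$) and 2$b$); Cases 3 and 4 each contribute the single point $(a,b)=(0,0)$ and Case 5 contributes $a=0$, giving 3), 4), 5); and Case 6 has no parameters, giving 6). Translating each row of the corresponding $[R]$ into relations $\rho_1, \rho_2$ yields exactly the eight pairs in the statement, and uniqueness of the row canonical form guarantees that these eight subspaces are pairwise distinct, so the list is complete and irredundant. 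The only real obstacle is bookkeeping, since none of the hard Gr\"obner-basis or partial-Smith-form computation happens here; the one point needing care is the ``this happens in no other case'' clause, which requires the maximum-rank information (not merely the minimal rank stated in several propositions) for Cases 2--6, and for Case 2 in particular rests on the vanishing of the last row of the Gr\"obner basis in Table \ref{rank2case2arity10grobnerbasis}.
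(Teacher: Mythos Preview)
Your proposal is correct and follows essentially the same approach as the paper, which offers no separate proof for this summary theorem and simply collects the conclusions of the six case analyses. You have in fact been more explicit than the paper, particularly in justifying the ``this happens in no other case'' clause by extracting the maximal ranks from Cases 2--6 (and correctly noting that for Case 2 this relies on the zero last row of the Gr\"obner basis in Table~\ref{rank2case2arity10grobnerbasis}).
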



\section{Quadratic Relation Rank 3}

\subsection{Case 1}

For rank 3, case 1, the quadratic relation matrix is
\[
\left[ \begin{array}{cccc} 
1 & 0 & 0 & a \\
0 & 1 & 0 & b \\
0 & 0 & 1 & c
\end{array} \right]
\]
Each row of this matrix is the coefficient vector of a quadratic relation which has 11 cubic consequences 
which are linear combinations of the 22 monomials in Table \ref{arity10basis}.
The resulting $33 \times 22$ matrix over $P = \mathbb{F}[a,b,c]$ whose rows generate the submodule of the free 
module $P^{22}$ of cubic consequences is given in Table 
\ref{rank3case1arity10}; the rows have been sorted so the leading 1s go from left to right.

\begin{table}[ht]
\[
\left[
\begin{array}{cccccccccccccccccccccc}
1 & . & . & a & . & . & . & . & . & . & . & . & . & . & . & . & . & . & . & . & . & . \\[-2pt] 
1 & . & . & . & . & . & a & . & . & . & . & . & . & . & . & . & . & . & . & . & . & . \\[-2pt] 
. & 1 & . & b & . & . & . & . & . & . & . & . & . & . & . & . & . & . & . & . & . & . \\[-2pt] 
. & 1 & . & . & . & . & . & . & . & . & . & . & a & . & . & . & . & . & . & . & . & . \\[-2pt] 
. & . & 1 & c & . & . & . & . & . & . & . & . & . & . & . & . & . & . & . & . & . & . \\[-2pt] 
. & . & 1 & . & . & . & . & . & . & . & . & . & . & . & . & . & . & a & . & . & . & . \\[-2pt] 
. & . & . & 1 & . & . & . & . & . & . & . & . & . & . & . & . & . & . & a & . & . & . \\[-2pt] 
. & . & . & . & 1 & . & b & . & . & . & . & . & . & . & . & . & . & . & . & . & . & . \\[-2pt] 
. & . & . & . & 1 & . & . & . & . & . & . & . & . & . & . & . & . & . & . & a & . & . \\[-2pt] 
. & . & . & . & . & 1 & c & . & . & . & . & . & . & . & . & . & . & . & . & . & . & . \\[-2pt] 
. & . & . & . & . & 1 & . & . & . & . & . & . & . & . & . & . & . & . & . & . & a & . \\[-2pt] 
. & . & . & . & . & . & 1 & . & . & . & . & . & . & . & . & . & . & . & . & . & . & a \\[-2pt] 
. & . & . & . & . & . & . & 1 & . & . & a & . & . & . & . & . & . & . & . & . & . & . \\[-2pt] 
. & . & . & . & . & . & . & 1 & . & . & . & . & b & . & . & . & . & . & . & . & . & . \\[-2pt] 
. & . & . & . & . & . & . & . & 1 & . & b & . & . & . & . & . & . & . & . & . & . & . \\[-2pt] 
. & . & . & . & . & . & . & . & 1 & . & . & . & . & . & . & . & . & b & . & . & . & . \\[-2pt] 
. & . & . & . & . & . & . & . & . & 1 & c & . & . & . & . & . & . & . & . & . & . & . \\[-2pt] 
. & . & . & . & . & . & . & . & . & 1 & . & . & . & . & . & . & . & . & b & . & . & . \\[-2pt] 
. & . & . & . & . & . & . & . & . & . & 1 & . & . & . & . & . & . & . & . & b & . & . \\[-2pt] 
. & . & . & . & . & . & . & . & . & . & . & 1 & c & . & . & . & . & . & . & . & . & . \\[-2pt] 
. & . & . & . & . & . & . & . & . & . & . & 1 & . & . & . & . & . & . & . & . & b & . \\[-2pt] 
. & . & . & . & . & . & . & . & . & . & . & . & 1 & . & . & . & . & . & . & . & . & b \\[-2pt] 
. & . & . & . & . & . & . & . & . & . & . & . & . & 1 & . & . & a & . & . & . & . & . \\[-2pt] 
. & . & . & . & . & . & . & . & . & . & . & . & . & 1 & . & . & . & c & . & . & . & . \\[-2pt] 
. & . & . & . & . & . & . & . & . & . & . & . & . & . & 1 & . & b & . & . & . & . & . \\[-2pt] 
. & . & . & . & . & . & . & . & . & . & . & . & . & . & 1 & . & . & . & c & . & . & . \\[-2pt] 
. & . & . & . & . & . & . & . & . & . & . & . & . & . & . & 1 & c & . & . & . & . & . \\[-2pt] 
. & . & . & . & . & . & . & . & . & . & . & . & . & . & . & 1 & . & . & . & c & . & . \\[-2pt] 
. & . & . & . & . & . & . & . & . & . & . & . & . & . & . & . & 1 & . & . & . & c & . \\[-2pt] 
. & . & . & . & . & . & . & . & . & . & . & . & . & . & . & . & . & 1 & . & . & . & c \\[-2pt] 
. & . & . & . & . & . & . & . & . & . & . & . & . & . & . & . & . & . & 1 & . & . & a \\[-2pt] 
. & . & . & . & . & . & . & . & . & . & . & . & . & . & . & . & . & . & . & 1 & . & b \\[-2pt] 
. & . & . & . & . & . & . & . & . & . & . & . & . & . & . & . & . & . & . & . & 1 & c
\end{array}
\right]
\]
\vspace{-4mm}
\caption{Rank 3, case 1: matrix of consequences in arity 10}
\label{rank3case1arity10}
\end{table}

\begin{proposition} \label{rank3case1arity10theorem}
The matrix of consequences in Table \ref{rank3case1arity10} has rank 21 or 22.
The rank is 21 for the following parameter values,
\[
(a,b,c) \; \in \; \{ \; (0,0,0), \; (-1,0,0), \; (-1,-1,-1), \; (-1,\alpha,\alpha^{-1}) \; \},
\]
where $\alpha$ is a primitive 6th root of unity ($\alpha^2 - \alpha + 1 = 0$).
The rank is 22 for all other values; since $\dim\calO(10) = 22$, 
these operads are nilpotent of index 3.
\end{proposition}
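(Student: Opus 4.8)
The plan is to follow the method used for the rank $2$ cases, applying the partial Smith form algorithm of Table~\ref{psftable} to the $33 \times 22$ matrix of consequences in Table~\ref{rank3case1arity10}. That matrix is built from the three relations $\rho_1 = \phi \circ_1 \phi + a\,\phi \circ_4 \phi$, $\rho_2 = \phi \circ_2 \phi + b\,\phi \circ_4 \phi$, $\rho_3 = \phi \circ_3 \phi + c\,\phi \circ_4 \phi$, and inspection of Table~\ref{rank3case1arity10} shows that every leading entry is a scalar $1$, with the parameters $a,b,c$ appearing only in off-pivot positions. Hence the algorithm keeps finding a nonzero field entry at or below/right of each pivot and proceeds without obstruction through the first $21$ pivots. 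First I would run the reduction to produce the block diagonal form $\mathrm{diag}(I_{21}, B)$; since the minimal rank turns out to be $21$ and $\dim\calO(10) = 22$, the residual block $B$ has size $(33-21) \times (22-21)$, i.e.\ it is a single column of length $12$.

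The decisive simplification is exactly that $B$ has only one column, so $\mathrm{rank}(B) \in \{0,1\}$ and therefore the rank of the matrix of consequences is $21 + \mathrm{rank}(B) \in \{21, 22\}$, with the value $21$ occurring precisely when $B$ is the zero column. Thus the whole question collapses to deciding when the $12$ entries of $B$ simultaneously vanish. These entries generate an ideal $I \subseteq P = \mathbb{F}[a,b,c]$ (the first determinantal ideal of $B$), and the minimal rank $21$ is attained exactly on the zero set $V(I)$.

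Next I would compute a deglex Gr\"obner basis for $I$, as in the proof of Proposition~\ref{rank1case1arity10theorem}. I expect a power of each of $a,b,c$ to appear among the leading monomials, so that $I$ is zero-dimensional and $V(I)$ is finite; triangulating and solving should return exactly the four listed families, including the cyclotomic point $(-1,\alpha,\alpha^{-1})$ governed by $\alpha^2 - \alpha + 1 = 0$. For every parameter value outside $V(I)$ we have $B \ne 0$, hence the rank equals $22 = \dim\calO(10)$, and then, exactly as in Proposition~\ref{rank2case1nilpotentproposition}, the corresponding operad is nilpotent of index $3$.

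The main obstacle is twofold: first, the bookkeeping of the partial Smith reduction on a matrix substantially larger than those treated earlier (the rank~$2$, case~$1$ reduction already needed $87$ row/column operations); and second, the Gr\"obner basis computation that extracts $V(I)$, where the delicate point is to verify that no solutions beyond the four listed families arise and that the genuine factor $\alpha^2 - \alpha + 1$ emerges rather than some spurious one. A useful consistency check is the close parallel with the minimal-rank locus of Proposition~\ref{rank1case1arity10theorem}: the same sixth-root-of-unity condition reappears after a relabelling of the parameters, which is what one would anticipate from the symmetry between a single relation and its three-dimensional complement.
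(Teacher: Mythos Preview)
Your proposal is correct and follows essentially the same approach as the paper: compute the partial Smith form to obtain $\mathrm{diag}(I_{21},B)$ with $B$ a column vector of length $12$, then determine the zero set of the ideal generated by its entries via a deglex Gr\"obner basis. The paper carries this out explicitly, listing the nine nonzero entries of $B$ and the resulting Gr\"obner basis, and even records in a subsequent remark the same duality-with-Proposition~\ref{rank1case1arity10theorem} observation you offer as a consistency check.
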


\begin{proof}
Computing the partial Smith form of this matrix takes 21 iterations;
the result is an upper left identity matrix of size 21, and a lower right column vector of size 12.
Hence the matrix has rank 21 or 22, and the rank is 21 if and only if the vector is zero.
In monic form, the 9 nonzero vector components are
\[
\begin{array}{l@{\qquad}l@{\qquad}l@{\qquad}l@{\qquad}l}
a^2(a+1), &  ba(a+1), &  ca(a+1), &  b^2(a+1), &  c^2(a+1),
\\[1mm]
b(b^2+c), &  b(bc+a), &  c(bc+a), &  c(c^2+b).
\end{array}
\]
These components generate an ideal whose deglex Gr\"obner basis ($a \prec b \prec c$) is
\[
b^2-ca, \quad c^2-ba, \quad a^2(a+1), \quad ab(a+1), \quad ac(a+1), \quad bc(a+1).
\]
This ideal is zero-dimensional, as can be seen from the first 3 elements.
Hence its zero set is finite, and the 5 points in the statement of the theorem can easily be
obtained starting from the alternative $a = 0$ or $a = -1$ (element 3).
\end{proof}

\begin{remark}
Compare Propositions \ref{rank1case1arity10theorem} and \ref{rank3case1arity10theorem}:
both have exactly 5 points which produce minimal rank.
The similarity comes from the fact that these two families of 3-parameter operads are Koszul duals, 
except for an (even) shift in the (homological) degree of the generating operation.
Markl and Remm \cite[Remark 2]{MR} state: ``If $\calP$ is a quadratic operad 
generated by an operation of arity $n$ and [homological] degree $d$, then the generating operation of 
[its Koszul dual] $\calP^!$ has the same arity but degree $-d + n - 2$''.
We have $n = 4$ and $d = 0$.
\end{remark}

\subsection{Case 2}

For rank 3, case 2, the quadratic relation matrix is
\[
\left[ \begin{array}{cccc} 
1 & 0 & a & 0 \\
0 & 1 & b & 0 \\
0 & 0 & 0 & 1
\end{array} \right]
\]
The $32 \times 22$ matrix of consequences is given in Table \ref{rank3case2arity10}; one consequence is 
generated twice but is only included once.

\begin{table}[ht]
\[
\left[
\begin{array}{cccccccccccccccccccccc}
1 & . & a & . & . & . & . & . & . & . & . & . & . & . & . & . & . & . & . & . & . & . \\[-2pt] 
1 & . & . & . & . & a & . & . & . & . & . & . & . & . & . & . & . & . & . & . & . & . \\[-2pt] 
. & 1 & b & . & . & . & . & . & . & . & . & . & . & . & . & . & . & . & . & . & . & . \\[-2pt] 
. & 1 & . & . & . & . & . & . & . & . & . & a & . & . & . & . & . & . & . & . & . & . \\[-2pt] 
. & . & 1 & . & . & . & . & . & . & . & . & . & . & a & . & . & . & . & . & . & . & . \\[-2pt] 
. & . & . & 1 & . & . & . & . & . & . & . & . & . & . & a & . & . & . & . & . & . & . \\[-2pt] 
. & . & . & 1 & . & . & . & . & . & . & . & . & . & . & . & . & . & . & . & . & . & . \\[-2pt] 
. & . & . & . & 1 & b & . & . & . & . & . & . & . & . & . & . & . & . & . & . & . & . \\[-2pt] 
. & . & . & . & 1 & . & . & . & . & . & . & . & . & . & . & a & . & . & . & . & . & . \\[-2pt] 
. & . & . & . & . & 1 & . & . & . & . & . & . & . & . & . & . & a & . & . & . & . & . \\[-2pt] 
. & . & . & . & . & . & 1 & . & . & . & . & . & . & . & . & . & . & a & . & . & . & . \\[-2pt] 
. & . & . & . & . & . & 1 & . & . & . & . & . & . & . & . & . & . & . & . & . & . & . \\[-2pt] 
. & . & . & . & . & . & . & 1 & . & a & . & . & . & . & . & . & . & . & . & . & . & . \\[-2pt] 
. & . & . & . & . & . & . & 1 & . & . & . & b & . & . & . & . & . & . & . & . & . & . \\[-2pt] 
. & . & . & . & . & . & . & . & 1 & b & . & . & . & . & . & . & . & . & . & . & . & . \\[-2pt] 
. & . & . & . & . & . & . & . & 1 & . & . & . & . & b & . & . & . & . & . & . & . & . \\[-2pt] 
. & . & . & . & . & . & . & . & . & 1 & . & . & . & . & b & . & . & . & . & . & . & . \\[-2pt] 
. & . & . & . & . & . & . & . & . & . & 1 & . & . & . & . & b & . & . & . & . & . & . \\[-2pt] 
. & . & . & . & . & . & . & . & . & . & 1 & . & . & . & . & . & . & . & . & . & . & . \\[-2pt] 
. & . & . & . & . & . & . & . & . & . & . & 1 & . & . & . & . & b & . & . & . & . & . \\[-2pt] 
. & . & . & . & . & . & . & . & . & . & . & . & 1 & . & . & . & . & b & . & . & . & . \\[-2pt] 
. & . & . & . & . & . & . & . & . & . & . & . & 1 & . & . & . & . & . & . & . & . & . \\[-2pt] 
. & . & . & . & . & . & . & . & . & . & . & . & . & 1 & . & a & . & . & . & . & . & . \\[-2pt] 
. & . & . & . & . & . & . & . & . & . & . & . & . & . & 1 & b & . & . & . & . & . & . \\[-2pt] 
. & . & . & . & . & . & . & . & . & . & . & . & . & . & . & . & 1 & . & . & . & . & . \\[-2pt] 
. & . & . & . & . & . & . & . & . & . & . & . & . & . & . & . & . & 1 & . & . & . & . \\[-2pt] 
. & . & . & . & . & . & . & . & . & . & . & . & . & . & . & . & . & . & 1 & . & a & . \\[-2pt] 
. & . & . & . & . & . & . & . & . & . & . & . & . & . & . & . & . & . & 1 & . & . & . \\[-2pt] 
. & . & . & . & . & . & . & . & . & . & . & . & . & . & . & . & . & . & . & 1 & b & . \\[-2pt] 
. & . & . & . & . & . & . & . & . & . & . & . & . & . & . & . & . & . & . & 1 & . & . \\[-2pt] 
. & . & . & . & . & . & . & . & . & . & . & . & . & . & . & . & . & . & . & . & 1 & . \\[-2pt] 
. & . & . & . & . & . & . & . & . & . & . & . & . & . & . & . & . & . & . & . & . & 1
\end{array}
\right]
\]
\vspace{-4mm}
\caption{Rank 3, case 2: matrix of consequences in arity 10}
\label{rank3case2arity10}
\end{table}

\begin{proposition}
The matrix of consequences for rank 3, case 2 has rank 21 if and only if $a = b = 0$; for all other
values of the parameters, the matrix has rank 22 and the operad is nilpotent of index 3.
\end{proposition}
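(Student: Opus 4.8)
The plan is to mimic the proof of Proposition \ref{rank3case1arity10theorem}: apply the partial Smith form algorithm of Table \ref{psftable} to the $32 \times 22$ matrix of consequences in Table \ref{rank3case2arity10}. The abundance of scalar $1$s among the leading entries means that the algorithm proceeds almost entirely by pivoting on field entries, clearing the corresponding rows and columns, so I expect it to run for $21$ iterations and to terminate with an upper left identity block $I_{21}$ together with a lower right block $B$ of size $(32-21) \times (22-21)$, that is, a column vector of length $11$ with entries in $P = \mathbb{F}[a,b]$. By the specification of the algorithm, for every $(a,b) \in \mathbb{F}^2$ we then have $\mathrm{rank} = 21 + \mathrm{rank}(B)$.

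First I would carry out the partial Smith reduction, tracking the row and column operations as was done in Table \ref{87operations} for the rank-$2$ case. Since $B$ is a single column, $\mathrm{rank}(B) \in \{0,1\}$, and the matrix of consequences therefore has rank $21$ or $22$, with rank $21$ occurring exactly when the column vector $B$ vanishes identically.

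Next I would examine the nonzero entries of $B$, which are polynomials in $a$ and $b$, and compute the deglex Gr\"obner basis of the ideal $I \subseteq P$ they generate. The expectation---consistent with the fact that the third relation $\phi \circ_4 \phi$ is a monomial, leaving only $a, b$ as genuine parameters---is that this Gr\"obner basis reduces to $\{ a, b \}$, so that $V(I) = \{ (0,0) \}$ is a single point. This identifies $a = b = 0$ as the unique place where $B = 0$, and hence where the rank drops to its minimal value $21$.

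Finally, for every other $(a,b)$ the block $B$ is nonzero, so the matrix of consequences has rank $22 = \dim \calO(10)$; as in Proposition \ref{rank3case1arity10theorem}, full rank in arity $10$ forces $\calI(10) = \calO(10)$, so every cubic monomial lies in the ideal and the quotient operad is nilpotent of index $3$. The hard part is purely the bookkeeping of the partial Smith reduction of a $32 \times 22$ matrix; the conceptual content---that a single-column block governs the rank and that its vanishing ideal is $(a,b)$---is immediate once the reduction has been performed.
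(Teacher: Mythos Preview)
Your plan is correct and matches the paper's proof essentially step for step: the paper also runs the partial Smith form algorithm for $21$ iterations, obtains $I_{21}$ together with an $11 \times 1$ column vector $B$ (with four zero entries and seven nonzero monic entries $a$, $b$, $ab$, $a^3$, $a^2b$, $ab^2$, $b(b^2{+}a)$), observes that the ideal they generate has Gr\"obner basis $\{a,b\}$, and concludes exactly as you do. The only thing missing from your write-up is the actual list of entries of $B$, which the paper records explicitly.
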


\begin{proof}
Computing a partial Smith form of this matrix requires 21 iterations;
we obtain an upper left identity matrix of size 21, and a lower right block which is a column vector of size 11.
The matrix has rank 21 if and only if the vector is zero.
Four entries of the vector are zero, and the monic forms of the remaining seven are
$a$,  $b$,  $ab$,  $a^3$,  $a^2b$,  $ab^2$,  $b(b^2+a)$.
These elements generate an ideal with Gr\"obner basis $\{ a, b \}$, and its zero set is the single point 
$\{ (0,0) \}$.
\end{proof}

\subsection{Case 3}

For rank 3, case 3, the quadratic relation matrix is
\[
\left[ \begin{array}{cccc} 
1 & a & 0 & 0 \\
0 & 0 & 1 & 0 \\
0 & 0 & 0 & 1
\end{array} \right]
\]
There is only one parameter; the matrix entries belong to $\mathbb{F}[a]$, and so we can
compute the HNF to understand how the rank depends on the parameter.

\begin{proposition}
The matrix of consequences for rank 3, case 3 has rank 21 if $a = 0$ and rank 22 otherwise 
(in which case the operad is nilpotent of index 3).
\end{proposition}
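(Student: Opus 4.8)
The plan is to exploit that $P = \mathbb{F}[a]$ is a PID, so the rank behaviour over all specializations is governed entirely by the Hermite normal form (HNF) of the matrix of consequences. The decisive structural feature is that two of the three generators, $\rho_2 = \phi \circ_3 \phi$ and $\rho_3 = \phi \circ_4 \phi$, are \emph{monomial} relations: each of their cubic consequences is a single basis monomial of Table \ref{arity10basis} (with one self-coincidence apiece, since for any monomial $\phi\circ_i\phi$ one has $\rho\circ_{2i-1}\phi = \phi\circ_i\rho$). These consequences thus supply standard basis vectors $e_j$ in many columns, each with leading entry $1$.

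First I would record exactly which columns these unit vectors occupy. The consequences of $\phi\circ_3\phi$ give $e_j$ for $j\in\{3,6,10,12,14,15,16,17,18,21\}$, and those of $\phi\circ_4\phi$ give $e_j$ for $j\in\{4,7,11,13,17,18,19,20,21,22\}$; together they produce unit vectors in all columns except $\{1,2,5,8,9\}$, accounting for 17 leading $1$s in the HNF.

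Next I would reduce the eleven binomial consequences of $\rho_1 = \phi\circ_1\phi + a\,\phi\circ_2\phi$ modulo these seventeen unit vectors. Each such consequence has the form $e_x + a\,e_y$, and after clearing every available unit vector the nonzero survivors lie in the span of $e_1,e_2,e_5,e_8,e_9$; explicitly they are
\[
e_1 + a\,e_5,\quad e_2 + a\,e_8,\quad e_5,\quad e_1 + a\,e_2,\quad e_8 + a\,e_9,\quad a\,e_9,
\]
the last coming from $\rho_1\circ_3\phi = e_3 + a\,e_9$ after cancelling the available $e_3$. Row-reducing this $6\times 5$ system over $\mathbb{F}[a]$ yields leading $1$s in columns $1,2,5,8$; the two entries that remain in column $9$ are $a\,e_9$ and, after the eliminations in column $2$, $a^3 e_9$, whose gcd is $a$. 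Hence the final pivot in column $9$ is exactly $a$.

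This assembles the whole picture: the HNF has 21 rows with leading entry $1$ and one further row whose sole entry is $a$ in column $9$. Specializing $a$ to a scalar, the 21 unit rows always survive while the last row survives if and only if $a\neq 0$, giving rank $22$ for $a\neq 0$ and rank $21$ at $a=0$; as $\dim\calO(10)=22$, the former operads are nilpotent of index $3$. The only delicate point is the gcd computation in column $9$, equivalently the fact that the single missing monomial at $a=0$ is $\phi\circ_2(\phi\circ_2\phi)$ (basis element $9$), which none of the three monomials $\phi\circ_1\phi,\phi\circ_3\phi,\phi\circ_4\phi$ can generate but which is recovered through the $a$-term of $\rho_1\circ_3\phi$; everything else is bookkeeping.
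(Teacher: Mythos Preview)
Your argument is correct and follows essentially the same route as the paper: compute the HNF over the PID $\mathbb{F}[a]$ and observe that it is $I_{22}$ except for the single entry $a$ in column~9, so the rank drops to 21 precisely when $a=0$. Your exposition is more structured than the paper's, which simply lists the positions of the 1s and $a$s and asserts the HNF result, but the method and the decisive observation (the lone $a$-pivot in column~9, corresponding to the monomial $\phi\circ_2(\phi\circ_2\phi)$) are the same.
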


\begin{proof}
The $28 \times 22$ matrix of consequences has leading 1s in these 28 positions,
  \[
  \begin{array}{l}
  (1,1), (2,1), (3,2), (4,3), (5,3), (6,4), (7,4), (8,5), (9,6), (10,6), (11,7), (12,7), 
  \\
  (13,8), (14,10), (15,11), (16,12), (17,13), (18,14), (19,14), (20,15), (21,16), 
  \\
  (22,17), (23,18), (24,19), (25,19), (26,20), (27,21), (28,22) 
  \end{array} 
  \]
and the parameter $a$ in these 11 positions,
  \[
  \begin{array}{l}
  (1,2), (2,5), (3,8), (4,9), (6,10), (8,11), (9,12), (11,13), (13,9), (18,15), (24,20);
  \end{array} 
  \]
all other entries are 0.
The HNF is $I_{22}$ except for $a$ in position $(9,9)$.
\end{proof}

\subsection{Case 4}

For rank 3, case 4, the quadratic relation matrix is
\[
\left[ \begin{array}{cccc} 
0 & 1 & 0 & 0 \\
0 & 0 & 1 & 0 \\
0 & 0 & 0 & 1
\end{array} \right]
\]

\begin{proposition}
The matrix of consequences for rank 3, case 4 has rank 21.
\end{proposition}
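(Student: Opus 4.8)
The plan is to exploit the fact that all three quadratic relations are \emph{monomials}: the rows of $[R]$ have a single nonzero entry each, so $\rho_1 = \phi \circ_2 \phi$, $\rho_2 = \phi \circ_3 \phi$, $\rho_3 = \phi \circ_4 \phi$. Since the partial composition of two tree monomials is again a single tree monomial, each of the $3 \times 11 = 33$ consequences $\rho_k \circ_i \phi$ and $\phi \circ_j \rho_k$ is a single basis element of $\calO(10)$. Consequently the submodule $\calI(10)$ is spanned by basis monomials, and because distinct basis monomials are linearly independent, its rank over $\mathbb{F}$ equals the number of \emph{distinct} monomials occurring among the consequences. The problem thus reduces to a combinatorial count, with no Gr\"obner basis or partial Smith form computation required.

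First I would enumerate the consequence monomials of each $\rho_k$ separately, reusing the rank-1 computations. Specializing the consequences of Case 2 (Table \ref{rank1case2consequences}) at $a = b = 0$ shows that $\rho_1 = \phi \circ_2 \phi$ produces the monomials numbered $\{2,5,8,9,10,11,12,13,15,20\}$ in Table \ref{arity10basis}; reading Table \ref{rank1case3arity10} at $a = 0$ shows that $\rho_2 = \phi \circ_3 \phi$ produces $\{3,6,10,12,14,15,16,17,18,21\}$; and Table \ref{rank1case4arity10} shows that $\rho_3 = \phi \circ_4 \phi$ produces $\{4,7,11,13,17,18,19,20,21,22\}$. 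Each relation yields only ten distinct monomials rather than eleven, because in each case one right composition coincides with a left composition (for example $\rho_3 \circ_7 \phi = \phi \circ_4 \rho_3 = \phi \circ_4 ( \phi \circ_4 \phi )$, and similarly $\rho_1 \circ_3 \phi = \phi \circ_2 \rho_1$ and $\rho_2 \circ_5 \phi = \phi \circ_3 \rho_2$).

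Taking the union of these three ten-element sets gives exactly $\{2,3,\dots,22\}$, that is, all $22$ basis monomials except monomial $1$, namely $\phi \circ_1 ( \phi \circ_1 \phi )$. Hence $\dim \calI(10) = 21$, which is the asserted rank. The absence of monomial $1$ also admits a conceptual explanation: it is the unique cubic monomial built entirely from $\circ_1$-compositions, and since $\phi \circ_1 \phi$ is the only quadratic monomial \emph{not} set to zero by $[R]$, the monomial $\phi \circ_1 ( \phi \circ_1 \phi )$ is the one cubic tree monomial that survives in the quotient $\calQ(10)$.

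The main obstacle is the bookkeeping in the right compositions $\rho_k \circ_i \phi$: one must track which leaf of the arity-7 monomial $\phi \circ_k \phi$ receives the substituted $\phi$, identify the resulting tree in the ordered basis, and detect the coincidences that drop the count from eleven to ten for each relation. All of this is exactly what the rank-1 tables already record, so the remaining work is only to verify that the three sets overlap precisely enough to cover monomials $2$ through $22$ while leaving monomial $1$ uncovered.
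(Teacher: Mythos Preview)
Your proof is correct and reaches exactly the observation the paper records: the distinct consequences are precisely the basis monomials $2$ through $22$, so after removing duplicates the matrix of consequences is the $21 \times 22$ matrix with first column zero and remaining columns forming $I_{21}$, already in RCF. Your route---noting that monomial relations yield monomial consequences and reusing the rank-$1$ tables to enumerate them---is a more explanatory version of the paper's one-line computation, and your closing remark that $\phi \circ_1 (\phi \circ_1 \phi)$ survives because it is the unique cubic tree containing no $\phi \circ_i \phi$ with $i \ge 2$ as a subtree is a pleasant conceptual bonus.
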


\begin{proof}
The matrix of consequences has size $21 \times 22$; the first column is zero, and 
the rest is an identity matrix of size 21.
This matrix is already in RCF.
\end{proof}

\subsection{Summary for quadratic relation rank 3}

In what follows $\alpha$ is a primitive 6th root of unity ($\alpha^2 - \alpha + 1 = 0$).

\begin{theorem}
For a 3-dimensional space of quadratic relations in arity 7, the rank $r$ of the cubic consequences is either 21 or 22.
The following is a complete list of triples of relations for which the rank achieves the minimal value of 21.
  \begin{align*}
  1a) \quad 
  \rho_1 &= 
  \phi \circ_1 \phi,
  \quad
  \rho_2 =
  \phi \circ_2 \phi,
  \quad
  \rho_3 =
  \phi \circ_3 \phi,
  \\
  1b) \quad 
  \rho_1 &= 
  \phi \circ_1 \phi - \phi \circ_4 \phi,
  \quad
  \rho_2 =
  \phi \circ_2 \phi,
  \quad
  \rho_3 =
  \phi \circ_3 \phi,
  \\
  1c) \quad 
  \rho_1 &= 
  \phi \circ_1 \phi - \phi \circ_4 \phi,
  \quad
  \rho_2 =
  \phi \circ_2 \phi - \phi \circ_4 \phi,
  \quad
  \rho_3 =
  \phi \circ_3 \phi - \phi \circ_4 \phi,
  \\
  1d) \quad 
  \rho_1 &= 
  \phi \circ_1 \phi - \phi \circ_4 \phi,
  \quad
  \rho_2 =
  \phi \circ_2 \phi + \alpha \phi \circ_4 \phi,
  \quad
  \rho_3 =
  \phi \circ_3 \phi + \alpha^{-1} \phi \circ_4 \phi,
  \\
  2) \quad 
  \rho_1 &= 
  \phi \circ_1 \phi,
  \quad
  \rho_2 =
  \phi \circ_2 \phi,
  \quad
  \rho_3 =
  \phi \circ_4 \phi,
  \\
  3) \quad 
  \rho_1 &= 
  \phi \circ_1 \phi,
  \quad
  \rho_2 =
  \phi \circ_3 \phi,
  \quad
  \rho_3 =
  \phi \circ_4 \phi,
  \\
  4) \quad 
  \rho_1 &= 
  \phi \circ_2 \phi,
  \quad
  \rho_2 =
  \phi \circ_3 \phi,
  \quad
  \rho_3 =
  \phi \circ_4 \phi.
  \end{align*}
In all other cases, the rank is 22 and the operad is nilpotent of index 3.
\end{theorem}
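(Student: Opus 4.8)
The plan is to deduce the theorem by assembling the four case-by-case Propositions of this section, so that the work reduces to classification and bookkeeping rather than new computation. First I would recall that any 3-dimensional subspace $R \subseteq \calO(7)$ is the row space of a unique $3 \times 4$ matrix in row canonical form, and that there are exactly $\binom{4}{3} = 4$ choices for the positions of the leading 1s. These four patterns are precisely the matrices of Cases 1--4 listed in Table \ref{table[R]}, with the free entries $a, b, c$ (or fewer) ranging over $\mathbb{F}$. Hence the four subsections together exhaust every 3-dimensional space of quadratic relations, with no overlap.

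Next I would invoke the rank dichotomy already proved in each case. Proposition \ref{rank3case1arity10theorem} settles Case 1, and the Propositions for Cases 2, 3, and 4 settle the remaining patterns; each shows that the rank of the matrix of cubic consequences is either 21 or 22. Taking the union over the four cases yields $r \in \{ 21, 22 \}$ for every 3-dimensional $R$, which is the first assertion of the theorem.

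To produce the complete list of minimal-rank triples I would read off, from each proposition, the exact parameter locus on which $r = 21$, and substitute those values into the rows of the corresponding matrix $[R]$. In Case 1 the four points $(0,0,0)$, $(-1,0,0)$, $(-1,-1,-1)$, $(-1,\alpha,\alpha^{-1})$ give the relations 1a)--1d); in Case 2 the single point $(a,b) = (0,0)$ gives 2); in Case 3 the single value $a = 0$ gives 3); and Case 4, having no parameters, always attains rank 21 and gives 4). Collecting these recovers exactly the seven triples in the statement. The only step demanding care is the faithful translation of each parameter tuple back into an explicit relation, in particular the primitive-sixth-root case 1d); but this is bookkeeping, since all the Gr\"obner basis computations were already carried out in the individual propositions.

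Finally, for the complementary nilpotency claim I would argue uniformly as in Proposition \ref{rank2case1nilpotentproposition}: any parameter value outside the minimal locus yields $r = 22 = \dim \calO(10)$, so that $\calI(10) = \calO(10)$ and therefore $\calQ(10) = 0$. Since every weight-4 tree monomial can be written as $\gamma \circ_j \phi$ with $\gamma$ of weight 3, closure of the ideal under right composition propagates $\calI(k) = \calO(k)$ to all higher weights, so the quotient vanishes in every weight $\ge 3$. As the relation space has dimension $3 < 4$ we have $\calQ(7) \ne 0$, whence the operad is nilpotent of index exactly 3. I do not expect any genuine obstacle: the substance of the theorem lives entirely in the four preceding propositions, and the summary is a matter of organizing their conclusions.
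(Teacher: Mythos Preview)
Your proposal is correct and matches the paper's approach: the theorem is a summary statement that the paper leaves without an explicit proof, being understood to follow immediately from the four case-by-case Propositions of Section~5. Your additional paragraph justifying nilpotency of index exactly~3 (via $\calI(10)=\calO(10)$ and propagation to higher weights) spells out what the paper takes for granted, and is a welcome clarification rather than a departure.
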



\section*{Acknowledgements}

Murray Bremner was partially supported by a Discovery Grant from NSERC, 
the Natural Sciences and Engineering Research Council of Canada.
His visit to Cape Town in November-December 2015 was partially supported by a grant from 
NRF, the National Research Foundation of South Africa.
He thanks the Department of Mathematics and Applied Mathematics at the University of Cape Town 
for its hospitality.


\section*{References}

\end{document}